\documentclass[12pt]{article}

\usepackage[latin1]{inputenc}
\usepackage{setspace}

\usepackage{algorithmic}
\usepackage{amsfonts}
\usepackage{amsmath}
\usepackage{amssymb}
\usepackage{bbm}
\usepackage{color}
\usepackage{eurosym}
\usepackage{graphicx}
\usepackage{latexsym}
\usepackage[T1]{fontenc}
\usepackage{upgreek}
\usepackage{mathtools}
\usepackage{mathdots}
\usepackage{enumerate}
\usepackage{hypernat}
\usepackage{hyperref}

\usepackage{amscd}
\usepackage{txfonts}

\newtheorem{theorem}{Theorem}

\newtheorem{lemma}{Lemma}
\newtheorem{proposition}{Proposition}

\newtheorem{example}{Example}

\newcommand{\bfb}{\mbox{$\mbox{\boldmath $b$}$}} %added by ph
\newcommand{\bfc}{\mbox{$\mbox{\boldmath $c$}$}}

\newcommand{\bff}{\mbox{$\mbox{\boldmath $f$}$}} %added by ph
\newcommand{\bfu}{\mbox{$\mbox{\boldmath $u$}$}} %added by ph
\newcommand{\bfv}{\mbox{$\mbox{\boldmath $v$}$}} %added by ph
\newcommand{\bfw}{\mbox{$\mbox{\boldmath $w$}$}} %added by ph
\newcommand{\bfx}{\mbox{$\mbox{\boldmath $x$}$}} %added by ph
\newcommand{\bfy}{\mbox{$\mbox{\boldmath $y$}$}} %added by ph
\newcommand{\bveps}{\mbox{$\mbox{\boldmath $\varepsilon$}$}} %added by ph
\newcommand{\bfzero}{\mbox{$\mbox{\boldmath $0$}$}} %added by ph

\newcommand{\bDelta}{\mbox{$\mbox{\boldmath $\Delta$}$}} %added by ph
\newcommand{\bnab}{\mbox{$\mbox{\boldmath $\nabla$}$}}

\newcommand{\bigzero}{\scalebox{0.9}{\mbox{\large $0$}}}

\begin{document}

\begin{center}

{\textbf{\Large THE NATURAL COMPONENTS OF AN  \\
AUTOREGRESSIVE TIME SERIES  \\
\rule{0cm}{0.65cm}ON BANACH SPACE   }}

%\vspace{0.5cm}
\textbf{Phil Howlett}\footnote{\textbf{Corresponding Author: Phil Howlett}, Scheduling and Control Group (SCG), Centre for Industrial and Applied Mathematics (CIAM), UniSA STEM, University of South Australia. email:~phil.howlett@unisa.edu.au,  url:~http://orcid.org/0000-0003-2382-8137.}, \textbf{Brendan K Beare}\footnote{\textbf{Brendan K Beare}, School of Economics, University of Sydney. email:~brendan.beare@sydney.edu.au, url:~https://orcid.org/0000 -0001-9146-131X.},

\end{center}

\begin{abstract}
We show that an autoregressive time series on Banach space can be canonically identified with a finite direct product of self-contained and self-determined natural components defined on separate subspaces by the spectral projections for each spectral point of the characteristic linear pencil.  We assume the resolvent of the pencil is analytic everywhere except for the spectral points which may be poles or isolated essential singularities. Each natural component is represented as a forward, backward or outward flow fixed by an upstream boundary condition in the far distant past, the far distant future, or at a nominated initial time\textemdash depending on whether the corresponding spectral point is outside, inside, or on the unit circle.
\end{abstract}

\textbf{Abstract:} We prove a generalized Granger\textendash Johansen representation theorem (GJRT) for finite or infinite order integrated auto-regressive time series on Banach space.

\textbf{Keywords:} Autoregressive time series, Linear operator pencils, Analytic functions, Resolvent operator, Banach spaces.\\
\rule{0cm}{0.6cm}\textbf{MSC\! [2020]:} 62M10, 91B84, 47A11, 47A56. \\

\section{Introduction}
\label{s:in}

Let $X, Y$ be complex Banach spaces and let $A_0, A_1 \in {\mathcal B}(X,Y)$ be nonzero bounded linear operators.  Consider the autoregressive equation of motion 
\begin{equation}
	\label{in:e1}
	A_0 \bfx_t + A_1 \bfx_{t-1} = \bveps_t \quad \mbox{ for all } t \in {\mathbb Z}
\end{equation}
where ${\mathbb Z}$ denotes the set of integers and the time series $\bveps = \{ \bveps_t \}_{t \in {\mathbb Z}} \in Y^{\mathbb Z}$ is known.  If $\bfx = \{\bfx_t\}_{t \in {\mathbb Z}} \in X^{\mathbb Z}$ satisfies (\ref{in:e1}) we say that $\bfx$ is an $AR(1)$ autoregressive time series generated by the known innovation series $\bveps$.  The characteristic function $A:{\mathbb C} \rightarrow {\mathcal B}(X,Y)$ is a linear pencil on the complex plane ${\mathbb C}$ defined by $A(z) = A_0 + zA_1$ for each $z \in {\mathbb C}$.  Let ${\mathbb C}_{\infty} = {\mathbb C} \cup \{ \infty\}$ denote the extended complex plane.  The spectral set for $A(z)$ is denoted by $\sigma = \sigma(A) \subset {\mathbb C}_{\infty}$ and is defined as follows.  If $\zeta \neq \infty$ and $A(\zeta)$ does not have a bounded inverse $A(\zeta)^{-1} \in {\mathcal B}(Y,X)$ then $\zeta \in \sigma$.  If $A_1$ does not have a bounded inverse $A_1^{-1} \in {\mathcal B}(Y,X)$ then $\infty \in \sigma$.  See Gohberg et al \cite[Section IV.1, p 49]{goh1} for more details.  Each $\zeta \in \sigma$ is called a spectral point for $A(z)$. 

\subsection{Assumptions}
\label{ss:a}

We assume that (i) $\sigma \neq \emptyset$ and $\sigma$ consists entirely of a finite number of isolated points; (ii) the resolvent $R(z)=A(z)^{-1} \in {\mathcal B}(Y,X)$ is analytic for all $z \in \mathbb C_\infty\setminus\sigma$; and (iii) the resolvent $R(z)$ can be represented by a Laurent series\footnote{The Laurent series representation at a spectral point $\zeta \in \sigma$ is described in Section~\ref{s:mb}.  We pay particular attention to the Laurent series representation when $\zeta = \infty \in \sigma$.} in a deleted neighbourhood of each spectral point.  We also assume
\begin{equation}
	\label{in:e2}
	\mbox{$\sum_{t \in {\mathbb Z}}$} \lVert \bveps_t \rVert r^{\lvert t \rvert} < \infty \quad \mbox{ for all } r \in (0,1).  \end{equation}
The inequality (\ref{in:e2}) implies that for each $\delta > 0$ we can find $T(\delta) \in {\mathbb N}$ such that $\lVert \bveps_t \rVert < e^{\delta \lvert t \rvert}$ for all $\lvert t \rvert > T(\delta)$.  Consequently we say that $\bveps$ is subexponential.  An equivalent condition is that $\limsup_{ \lvert t \rvert \to \infty} \| \bveps_t \|^{1/\lvert t \rvert} \leq 1$.

\subsection{Contribution}
\label{ss:cont}

We show that an autoregressive time series on Banach space defined by the equation of motion (\ref{in:e1}) and generated by a known subexponential innovation series can be canonically identified with a finite direct product of \textit{natural} components on separate subspaces.  We establish the following canonical structure.  
For each $\zeta \in \sigma$ there are corresponding spectral projections $P_\zeta \in {\mathcal B}(X)$ and $Q_\zeta \in {\mathcal B}(Y)$ such that $P_\zeta P_\eta = \bigzero_X$ and $Q_\zeta Q_\eta = \bigzero_Y$ when $\zeta, \eta \in \sigma$ and $\eta \neq \zeta$, and such that $\sum_{\zeta \in \sigma} P_\zeta = I_X$ and $\sum_{\zeta \in \sigma} Q_\zeta = I_Y$.  We define projected subspaces $X_\zeta = P_\zeta(X) \subset X$ and $Y_\zeta = Q_\zeta(Y) \subset Y$ for each $\zeta \in \sigma$.  For each $\zeta \in \sigma$ there is a component $\bfx_\zeta = P_\zeta \bfx \in X_\zeta^{\mathbb Z}$ of the time series defined by $\bfx_{\zeta,t} = P_\zeta \bfx_t$ for each $t \in {\mathbb Z}$ and a corresponding component $\bveps_\zeta = Q_\zeta \bveps \in Y_\zeta^{\mathbb Z}$ of the innovation series defined by $\bveps_{\zeta,t} = Q_\zeta \bveps_t$.   The projections induce canonical isomorphisms $X \cong \prod_{\zeta\in\sigma} X_\zeta$ and $Y \cong \prod_{\zeta\in\sigma} Y_\zeta$ via the respective linear mappings $\bfu \mapsto (\bfu_\zeta)_{\zeta\in\sigma}$ and $\bfv \mapsto (\bfv_\zeta)_{\zeta\in\sigma}$.  Applying these identifications pointwise in $t$, we associate each time series $\bfx \in X^{\mathbb Z}$ with the corresponding tuple of natural time series components $(\bfx_\zeta)_{\zeta\in\sigma} \in \prod_{\zeta\in\sigma}X_\zeta^{\mathbb Z}$ and each innovation series $\bveps \in Y^{\mathbb Z}$ with the corresponding tuple of natural innovation series components $(\bveps_\zeta)_{\zeta\in\sigma} \in \prod_{\zeta\in\sigma} Y_\zeta^{\mathbb Z}$.  The component pair $(\bfx_\zeta, \bveps_\zeta)$ satisfies the projected equation of motion\footnote{Multiply (\ref{in:e1}) on the left by $Q_\zeta$ and use the identities $Q_{\zeta}A_0 = A_0P_{\zeta}$ and $Q_{\zeta}A_1 = A_1P_{\zeta}$ for all $\zeta \in \sigma$.}
\begin{equation}
	\label{in:e3}
	A_0 \bfx_{\zeta,t} + A_1 \bfx_{\zeta, t-1} = \bveps_{\zeta, t} \quad \mbox{ for each } \zeta \in \sigma \mbox{ and all } t \in {\mathbb Z}.
\end{equation}
Thus each $\bfx_{\zeta}$ is an $AR(1)$ time series solution to (\ref{in:e3}) generated by $\bveps_\zeta$ and the pairs $(\bfx_{\zeta}, \bveps_{\zeta}) \in X_{\zeta}^{\mathbb Z} \times Y_{\zeta}^{\mathbb Z}$ are self-contained and self-determined.  The component $\bfx_\zeta$ is represented as a forward flow if $\lvert \zeta \rvert > 1$, as a backward flow if $\lvert \zeta \rvert < 1$ and as an outward flow if $\lvert \zeta \rvert = 1$.  We find an explicit solution to (\ref{in:e3}) and show that $\bfx_\zeta$ is the sum of two parts\textemdash a particular subexponential cumulation series $\bfx_{p,\zeta}$ depending only on $\bveps_\zeta$ and a complementary series $\bfx_{c,\zeta}$ determined by an arbitrary boundary constant.  When $\zeta \neq 0, \infty$ the constant lies in the projected subspace $X_\zeta$ and the flow is uniquely defined.  When $\zeta = 0, \infty$ the constant must lie in a subspace of $X_\zeta$, the algebraic core of the quasinilpotent propagating operator, in order to define a compatible forward branch for the prescribed backward flow when $\zeta = 0$, or a compatible backward branch for the prescribed forward flow when $\zeta = \infty$.  The compatible branch is uniquely defined by the boundary constant if the propagating operator is injective on the algebraic core.  The complementary series is the general solution when $\bveps_\zeta = \{\bfzero\}_{t \in {\mathbb Z}}$.

\subsection{A note about random autoregressive time series and terminology}
\label{ss:ass}

The series $\bveps = \{ \bveps_t \}_{t \in {\mathbb Z}}$ can be regarded as a \textit{realization} of a \textit{random} time series.  Let $(\Omega, \Sigma,\mu)$ be a probability space.  For each $t \in {\mathbb Z}$ let $\bveps_t: \Omega \to Y$ be a $\mu$-measurable function.  Choose $r \in (0,1)$.  If ${\mathbb E}[ \sum_{t \in {\mathbb Z}} \lVert \bveps_t \rVert r^{\lvert t \rvert} ] = \sum_{t \in {\mathbb Z}} {\mathbb E}[ \lVert \bveps_t \rVert ] r^{\lvert t \rvert} < \infty$ then $\sum_{t \in {\mathbb Z}} \lVert \bveps_t(\omega) \rVert r^{\lvert t \rvert} < \infty$ almost surely.  Apply the preceding argument to a countable sequence $\{ r_n \} \subset (0,1)$ with $r_n \uparrow 1$ and intersect the resulting probability one events. Finiteness for all $r \in (0,1)$ follows by choosing $n$ with $r<r_n$.  Now suppose $\omega \in \Omega$ is fixed \textit{a priori}.  In this case the solution to $A_0 \bfx_t(\omega) + A_1 \bfx_{t-1}(\omega) = \bveps_t(\omega)$ for all $t \in {\mathbb Z}$ is a \textit{realized} time series $\bfx = \bfx(\omega)$ generated by a \textit{realized} subexponential series $\bveps = \bveps(\omega)$.  Thus our results apply almost surely to $(\bfx(\omega), \bveps(\omega))$.

\section{Mathematical background}
\label{s:mb}

For each $\zeta \in {\mathbb C}$ let $A(z) = A_{\zeta,0} + (z-\zeta) A_1$ where $A_{\zeta,0} = A_0 + \zeta A_1$ and for each interval $J \subset [0, \infty]$ let ${\mathcal D}_{\zeta, J} = \{ z \in {\mathbb C} \mid \lvert z - \zeta \rvert \in J \}$.  Let ${\mathbb N} = \{ 1,2,3,\ldots\}$ denote the set of natural numbers and write $s + {\mathbb N} = \{ s + n \mid n \in {\mathbb N} \}$ and $s - {\mathbb N} = \{ s - n \mid n \in {\mathbb N} \}$ for each $s \in {\mathbb Z}$.  The Laurent series results are adapted from \cite{alb1,alb2}.

\paragraph{The Laurent series for $R(z)$ at $\zeta \neq \infty$.}  Let $\zeta \in \sigma$ with $\zeta \neq \infty$ and $r \in (0, \infty]$.  The resolvent $R(z)$ is analytic for $z \in {\mathcal D}_{\zeta, (0,r)}$ and can be represented in the form
\begin{equation}
	\label{mb:e1}
	R(z) = \mbox{$\sum_{k \in {\mathbb N}}$} (z - \zeta)^{-k}R_{\zeta,-k} + \mbox{$\sum_{\ell \in {\mathbb N}-1}$} (z - \zeta)^{\ell}R_{\zeta,\ell} = R_{\zeta, \mbox{\scriptsize\textup{sg}}}(z) + R_{\zeta, \mbox{\scriptsize\textup{rg}}}(z)  
\end{equation}
if and only if $\{ R_{\zeta,j}\}_{j \in {\mathbb Z}} \subset {\mathcal B}(Y,X)$ satisfy the magnitude constraints $\lim_{k \to \infty} \lVert R_{\zeta,-k} \rVert^{1/k} = 0$ and $\lim_{\ell \to \infty} \lVert R_{\zeta,\ell} \rVert^{1/\ell} \leq 1/r$ and the systems of left and right fundamental equations
\begin{equation}
	\label{mb:e2}
	R_{\zeta,j-1}A_1 + R_{\zeta,j}A_{\zeta,0} = \delta_{0,j} I_X \mbox{ and }  A_1R_{\zeta,j-1} + A_{\zeta,0}R_{\zeta,j} = \delta_{0,j} I_Y 
\end{equation}
where $\delta_{0,j} = 1$ if $j=0$ and $\delta_{0,j} = 0$ if $j \in {\mathbb Z} \setminus \{0\}$.  If these conditions are  satisfied then $(i)$ the singular part is given by $R_{\zeta, \mbox{\scriptsize\textup{sg}}}(z) = [(z - \zeta) I_X + R_{\zeta,-1}A_{\zeta,0}]^{-1}R_{\zeta,-1}$ for $z \in {\mathcal D}_{\zeta, (0, \infty)}$; $(ii)$ the regular part is given by $R_{\zeta, \mbox{\scriptsize\textup{rg}}}(z) = [I_X + (z - \zeta) R_{\zeta,0}A_1 ]^{-1}R_{\zeta,0}$ for $z \in {\mathcal D}_{\zeta, [0,r)}$; $(iii)$ $R_{\zeta,-k} = (-1)^{k-1}(R_{\zeta,-1}A_{\zeta,0})^{k-1}R_{\zeta,-1}$ for $k \in {\mathbb N}$; $(iv)$ $R_{\zeta,\ell} = (-1)^{\ell}(R_{\zeta,0}A_1)^{\ell}R_{\zeta,0}$ for $\ell \in {\mathbb N}-1$; $(v)$ the spectral projections at $\zeta$ are $P_{\zeta} = R_{\zeta,-1}A_1, P_{\zeta}^c = I_X - P_\zeta = R_{\zeta,0}A_{\zeta,0} \in {\mathcal B}(X)$ and $Q_{\zeta} = A_1 R_{\zeta,-1}, Q_{\zeta}^c = I_Y - Q_\zeta = A_{\zeta,0}R_{\zeta,0} \in {\mathcal B}(Y)$; $(vi)$ the spectral projections separate $R_{\zeta, \mbox{\scriptsize\textup{sg}}}(z)$ and $R_{\zeta, \mbox{\scriptsize\textup{rg}}}(z)$ with $R_{\zeta,-k} = P_\zeta R_{\zeta,-k}Q_\zeta$ for $k \in {\mathbb N}$ and $R_{\zeta, \ell} = P_{\zeta}^c R_{\zeta,\ell} Q_{\zeta}^c$ for $\ell \in {\mathbb N}-1$; and $(vii)$ $A_i = Q_{\zeta} A_i P_{\zeta} + Q_{\zeta}^c A_i P_{\zeta}^c$ for each $i=0,1$.

\paragraph{The Laurent series for $R(z)$ at $\zeta = \infty$.}  Let $B_0 = A_1$, $B_1 = A_0$ and $w = z^{-1}$.  The resolvent $R(z)$ has a spectral point at infinity if and only if $S(w) = (B_0 + w B_1)^{-1}$ has an isolated singularity at $w=0$ with $S(w) = \sum_{j \in {\mathbb Z}} w^j S_{0,j}$ for $w \in {\mathcal D}_{0,(0,r^{-1})}$ where $S_{0,j} \in {\mathcal B}(Y,X)$ and $r > 0$ is chosen so that $\lvert \zeta \rvert < r$ for all $\zeta \in \sigma \cap {\mathbb C}$.  The identity $R(z) = z^{-1}S(z^{-1})$ for $z \in {\mathbb C} \setminus \sigma$ shows that $R(z) = \sum_{j \in {\mathbb Z}} z^j R_{\infty,j}$ for $z \in {\mathcal D}_{0, (r,\infty)}$ with $R_{\infty,j} = S_{0,-j-1}$ for all $j \in {\mathbb Z}$.  The spectral projections at $\infty$ are $P_{\infty} = R_{\infty, 0}A_0, P_\infty^c = I_X - P_\infty = R_{\infty,-1}A_1 \in {\mathcal B}(X)$ and $Q_{\infty} = A_0R_{\infty,0}, Q_\infty^c = I_Y - Q_\infty = A_1R_{\infty,-1} \in {\mathcal B}(Y)$.

\paragraph{The global representation for $R(z)$.}  The global formul{\ae} $R(z) = \mbox{$\sum_{\zeta \in \sigma}$}\, R_{\zeta, \mbox{\scriptsize\textup{sg}}}(z)$ for $z \notin \sigma$ when $\infty \notin \sigma$; and $R(z) = \mbox{$\sum_{\zeta \in\, \sigma\, \cap\, {\mathbb C}}$}\, R_{\zeta, \mbox{\scriptsize\textup{sg}}}(z) + R_{\infty, \mbox{\scriptsize\textup{sg}}}(z)$ for $z \notin \sigma$ when $\infty \in \sigma$; were established in \cite[Section 6]{alb1}.   The singular part of $R(z)$ at $\infty$ is $R_{\infty, \mbox{\scriptsize\textup{sg}}}(z) = \sum_{\ell \in {\mathbb N}-1} z^{\ell}R_{\infty,\ell}$ for all $z \in {\mathbb C}$.

\paragraph{Representation of the spectral flow components.}  We can use either direct sums or direct products to represent the component flows. The product notation allows us to display the components explicitly.

\begin{proposition}
\label{mb:prpsn1}
For each $\zeta \in \sigma$ the spaces $X_\zeta = P_\zeta X$ and $Y_\zeta = Q_\zeta Y$ are closed Banach subspaces with $X = \bigoplus_{\zeta \in \sigma} X_\zeta$ and $Y = \bigoplus_{\zeta \in \sigma}Y_\zeta$ represented by finite internal topological direct sums.  Equivalently the finite coordinate maps $\Phi_X: X \to \prod_{\zeta \in \sigma} X_\zeta$ and $\Phi_Y: Y \to \prod_{\zeta \in \sigma} Y_\zeta$ defined by $\bfx \mapsto ( \bfx_\zeta )_{\zeta \in \sigma}$ and $\bfy \mapsto ( \bfy_\zeta)_{\zeta \in \sigma}$, where $\bfx_\zeta = P_\zeta \bfx$ and $\bfy_\zeta = Q_\zeta \bfy$, with product space norms defined by $\lVert (\bfx_\zeta)_{\zeta \in \sigma}\rVert = \max_{\zeta \in \sigma} \lVert \bfx_\zeta \rVert$ and $\lVert (\bfy_\zeta)_{\zeta \in \sigma}\rVert = \max_{\zeta \in \sigma} \lVert \bfy_\zeta \rVert$, are Banach space isomorphisms with inverse mappings defined by $(\bfx_\zeta)_{\zeta \in \sigma} \mapsto \sum_{\zeta \in \sigma} \bfx_\zeta$ and $(\bfy_\zeta)_{\zeta \in \sigma} \mapsto \sum_{\zeta \in \sigma} \bfy_\zeta$.
\end{proposition}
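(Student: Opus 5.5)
The plan is to take as given the facts recorded in the Introduction and in Section~\ref{s:mb}. The $P_\zeta \in {\mathcal B}(X)$ and $Q_\zeta \in {\mathcal B}(Y)$ are bounded, and they satisfy $P_\zeta P_\eta = \bigzero_X$ and $Q_\zeta Q_\eta = \bigzero_Y$ for $\eta \neq \zeta$. They also resolve the identity: $\sum_{\zeta \in \sigma} P_\zeta = I_X$ and $\sum_{\zeta \in \sigma} Q_\zeta = I_Y$. The last identity comes from the global representation of $R(z)$ in \cite[Section 6]{alb1}. Idempotence is read off from $(v)$ and the fundamental equations (\ref{mb:e2}): for instance $P_\zeta^2 = R_{\zeta,-1}A_1R_{\zeta,-1}A_1$, and $A_1R_{\zeta,-1} = Q_\zeta$ with $R_{\zeta,-1} = P_\zeta R_{\zeta,-1} Q_\zeta$ by $(vi)$. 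Alternatively, multiply $\sum_\eta P_\eta = I_X$ on the left by $P_\zeta$ and use orthogonality, which gives $P_\zeta^2 = P_\zeta$ directly. Everything below is argued for $X$; the argument for $Y$ is identical with $Q_\zeta$ in place of $P_\zeta$.

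\textbf{Step 1: closedness.} Since $P_\zeta$ is a bounded idempotent, $X_\zeta = P_\zeta X = \ker(I_X - P_\zeta)$. This is the kernel of a bounded operator, hence a closed subspace, and so $X_\zeta$ is a Banach space in the restricted norm.

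\textbf{Step 2: algebraic direct sum.} Every $\bfx \in X$ satisfies $\bfx = \sum_{\zeta} P_\zeta \bfx$, so $X = \sum_\zeta X_\zeta$. For uniqueness, suppose $\sum_\zeta \bfu_\zeta = \bfzero$ with $\bfu_\zeta \in X_\zeta$. Write $\bfu_\zeta = P_\zeta \bfu_\zeta$ and apply $P_\eta$. Orthogonality kills every term with $\zeta \neq \eta$, and idempotence leaves $\bfu_\eta = \bfzero$. Hence the sum is direct and the coordinates of $\bfx$ are exactly $\bfx_\zeta = P_\zeta \bfx$.

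\textbf{Step 3: topological statement.} Define $\Phi_X \bfx = (P_\zeta \bfx)_{\zeta\in\sigma}$. The product of finitely many Banach spaces with the max norm is a Banach space. The map $\Phi_X$ is linear and bounded, with $\lVert \Phi_X \bfx \rVert \le \max_\zeta \lVert P_\zeta \rVert\,\lVert \bfx \rVert$, and $\sigma$ is finite so this maximum is finite. Define $\Psi_X(\bfx_\zeta)_\zeta = \sum_\zeta \bfx_\zeta$. It is bounded, with $\lVert \Psi_X \rVert \le \# \sigma$. Then $\Psi_X \Phi_X = \sum_\zeta P_\zeta = I_X$, and $\Phi_X \Psi_X (\bfx_\eta)_\eta = (\sum_\eta P_\zeta \bfx_\eta)_\zeta = (\bfx_\zeta)_\zeta$ by the computation in Step 2. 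So $\Phi_X$ is a bicontinuous linear bijection, i.e.\ a Banach space isomorphism with inverse $\Psi_X$. Because the projections are bounded, the sum is a topological direct sum, and no appeal to the open mapping theorem is needed.

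\textbf{Main obstacle.} The only nontrivial input is the pair of identities $P_\zeta P_\eta = \bigzero_X$ and $\sum_\zeta P_\zeta = I_X$, particularly when $\infty \in \sigma$, where $P_\infty = R_{\infty,0}A_0$ is defined differently. If these are not simply quoted from \cite{alb1}, I would derive them as follows. Express $P_\zeta = \frac{1}{2\pi i}\oint_{\Gamma_\zeta} R(z)A_1\,dz$ for a small circle $\Gamma_\zeta$ around each finite $\zeta$. Take $P_\infty$ to be $I_X$ minus the integral over a large circle enclosing all finite spectral points. Orthogonality then follows from the resolvent-type identity $R(z)A_1R(w) = (R(w)-R(z))/(z-w)$ together with Cauchy's theorem. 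The sum identity follows from $\frac{1}{2\pi i}\oint_{\Gamma} R(z)A_1\,dz$ over the large circle, using the expansion $R(z)A_1 = z^{-1} I_X + \cdots$ at infinity when $\infty \notin \sigma$, so that this integral equals $I_X$.
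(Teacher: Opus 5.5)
Your proof is correct and follows the same route as the paper: closedness comes from the boundedness of the projections, existence and uniqueness of the decomposition come from $\sum_{\zeta \in \sigma} P_\zeta = I_X$ together with $P_\zeta P_\eta = \bigzero_X$ for $\eta \neq \zeta$, and bicontinuity of the coordinate map comes from the finiteness of $\sigma$. Your remark that the inverse $(\bfx_\zeta)_{\zeta\in\sigma} \mapsto \sum_{\zeta\in\sigma} \bfx_\zeta$ is explicitly bounded, so the open mapping theorem the paper cites is not needed, is a small sharpening, and your contour-integral derivation of the two identities supplies what the paper simply quotes from the earlier work.
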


\textbf{Proof.}  The subspaces $X_\zeta = P_\zeta(X)$ and $Y_\zeta = Q_\zeta(Y)$ are closed because $P_\zeta$ and $Q_\zeta$ are bounded projections.  The partitions of unity $\sum_{\zeta \in \sigma} P_\zeta = I_X$ and $\sum_{\zeta \in \sigma} Q_\zeta = I_Y$ and mutual annihilation properties $P_\zeta P_\eta = \bigzero_X$ and $Q_\zeta Q_\eta = \bigzero_Y$ when $\eta \neq \zeta$ confirm the existence and uniqueness of the decompositions.  The boundedness of the coordinate map and its inverse follows from the finiteness of the spectral set and the open mapping theorem. Pointwise application of the coordinate maps for all $t \in {\mathbb Z}$ gives $X^{\mathbb Z} \cong \prod_{\zeta \in \sigma} X_\zeta^{\mathbb Z}$ and $Y^{\mathbb Z} \cong \prod_{\zeta \in \sigma} Y_\zeta^{\mathbb Z}$.  Because $\sigma$ is finite the external direct sum and Cartesian product coincide as vector spaces.  $\hfill \Box$ 

\section{Relationship to previous research and significance of the new results}
\label{s:rpr}

The representation of cointegrated $AR$ time series is of particular interest to statisticians and economists.  See \cite{bea1,how1} for recent reviews of the literature.  When $A_0, A_1 \in {\mathcal B}({\mathbb C}^m)$ are matrices and $\det A(z) \neq 0$ for $\lvert z \rvert \leq 1$ the series $\bfx$ can be modelled as a stable forward flow.  If $\det A(1) = 0$ but $\det A(z) \neq 0$ for $\lvert z \rvert \leq 1$ and $z \neq 1$ then $\bfx$ is called a \textit{unit root} process and the forward flow is no longer stable.  The Granger\textendash Johansen Representation Theorem (GJRT) for unit root processes on Euclidean space was proposed by Engle and Granger \cite{eng1} in 1987 and later proved in modified form by Johansen \cite{joh1,joh2}.  The GJRT has since been extended to Hilbert space \cite{bea2,fra1} subject to certain practical but nevertheless mathematically restrictive assumptions.  More recently the GJRT was extended to all autoregressive unit root processes on Banach space by Howlett et al.~\cite{how1}.   They showed that each unit root process $\bfx \in X^{\mathbb Z}$ can be expressed as the sum $\bfx = \bfx_{\mbox{\scriptsize sin}} + \bfx_{\mbox{\scriptsize reg}}$ of a singular pure unit root process $\bfx_{\mbox{\scriptsize sin}} \in P_1(X)^{\mathbb Z} = X_1^{\mathbb Z}$ and a regular stable time series $\bfx_{\mbox{\scriptsize reg}} \in P_1^c(X)^{\mathbb Z} = [X_1^c]^{\mathbb Z}$ on complementary subspaces defined by the spectral projections at $\zeta = 1 \in \sigma$.  This representation could also be expressed in the form $\bfx \simeq (\bfx_{\mbox{\scriptsize sin}}, \bfx_{\mbox{\scriptsize reg}}) \in X_1^{\mathbb Z} \times [X_1^c]^{\mathbb Z}$  In this paper we consider a wider class of time series and extend the two-component representation to a more expansive spectral-coordinate representation $\bfx \simeq (\bfx_\zeta)_{\zeta \in \sigma} \in \prod_{\zeta \in \sigma} X_\zeta^{\mathbb Z}$ over all spectral points.  A similar decomposition \cite{bea1} using direct sums was recently established for autoregressive time series on ${\mathbb C}^m$.  The direct product notation is preferred here because it emphasizes the separation of components and underlines the self-determined and self-contained evolution of the different components. The extension to time series on Banach space is significant because the global spectral decomposition \cite{alb1} is more difficult to justify; the spectral points may be isolated essential singularities; the spectral subspaces may be infinite-dimensional; the particular cumulation series may be infinite sums depending on all upstream innovations; the operators $(R_{0,-1}A_0), (R_{\infty,0}A_1) \in {\mathcal B}(X)$ may be quasinilpotent; and the solutions $\bfx_0, \bfx_\infty$ may include a nonzero complementary series.  There are no such solutions on ${\mathbb C}^m$.  

\section{The main results}
\label{s:mr}

For each $\bveps \in Y^{\mathbb Z}$ and each $\zeta \neq 0, \infty$ define the respective forward and backward weighted difference operators $\bDelta_{\zeta} \bveps_{\zeta,t} = \zeta \bveps_{\zeta,t+1} - \bveps_{\zeta,t}$ and $\bnab_{\zeta} \bveps_{\zeta,t} = \zeta \bveps_{\zeta,t} - \bveps_{\zeta,t-1}$ for all $t \in {\mathbb Z}$.  Negative powers of the weighted differences can be expressed as absolutely convergent sums 
\begin{equation}
\label{mr:e0b}
{\bnab_{\zeta}}^{-k} \bveps_{\zeta,t} = \mbox{$\sum_{\ell \in {\mathbb N}-1}$} \mbox{$\binom{k-1+\ell}{\ell}$} \zeta^{-k-\ell} \bveps_{\zeta, t-\ell}
\end{equation}
for $\lvert \zeta \rvert > 1$ and
\begin{equation}
\label{mr:e0f}
{\bDelta_{\zeta}}^{-k} \bveps_{\zeta, t+k} = (-1)^k \mbox{$\sum_{r \in {\mathbb N}-1}$} \mbox{$\binom{k-1+r}{r}$} \zeta^r \bveps_{\zeta, t+k+r}
\end{equation}
for $0 < \lvert \zeta \rvert < 1$.  Let $\bfx_{t_0}$ be the observed initial value.  Define the truncated innovation series $\bveps_{\zeta,t_0^+}$ and $\bveps_{\zeta,t_0^-}$ by setting 
\begin{equation*}
	\bveps_{\zeta,t_0^+,t} = \left\{\begin{array}{ll}
		\bfzero & \mbox{for } t \in t_0 + 1 - {\mathbb N} \\
		\bveps_{\zeta,t} & \mbox{for } t \in t_0 + {\mathbb N} \end{array} \right. \quad \mbox{and} \quad \bveps_{\zeta,t_0^-,t} = \left\{\begin{array}{ll}
		\bveps_{\zeta,t} & \mbox{for } t \in t_0 +1 - {\mathbb N} \\
		\bfzero & \mbox{for } t \in t_0 + {\mathbb N}. \end{array} \right.
\end{equation*}
If $\bfc \in Y$ we write $F^{-1} \bfc = \{ \bfb \in X \mid F \bfb = \bfc \}$ whether or not $F \in {\mathcal B}(X,Y)$ is invertible.  Define the noninvertible operators $G_0 = R_{0,-1}A_0 \in {\mathcal B}(X)$ when $0 \in \sigma$ and $G_\infty = R_{\infty,0}A_1 \in {\mathcal B}(X)$ when $\infty \in \sigma$ and the invertible operator $H_\zeta = \zeta I_X - R_{\zeta,-1}A_{\zeta,0} \in {\mathcal B}(X)$ when $\zeta \in \sigma$ and $\zeta \neq 0, \infty$.  In order to derive the time series solutions as directed flows it is necessary to reconcile the observed values with a compatible but unobserved upstream orbit.  If the propagating operator $G$ is not invertible the algebraic core of $G$ is a key instrument in recovering a compatible upstream orbit.

\begin{lemma}[algebraic core]
\label{mr:lem1}
Let $U$ be a Banach space and let $G \in {\mathcal B}(U)$.  Define
\begin{equation*}
	{\mathcal C}_G(U) = \{ \bfu_0 \in U \mid \exists\ \{ \bfu_p \}_{p \in {\mathbb N}} \subset U \mbox{ with } \bfu_{p-1} = G(\bfu_p) \mbox{ for all } p \in {\mathbb N} \}.
\end{equation*}
Then:
\begin{enumerate}[(i)]
\item the algebraic core ${\mathcal C}_G(U)$ is a linear subspace;
\item $G({\mathcal C}_G(U)) = {\mathcal C}_G(U)$;
\item ${\mathcal C}_G(U)$ is the largest linear subspace $M \subset U$ satisfying $G(M) = M$;
\item if $G$ is injective on ${\mathcal C}_G(U)$ then $G\vert_{{\mathcal C}_G(U)}:{\mathcal C}_G(U) \to {\mathcal C}_G(U)$ is bijective and the compatible orbit $G^{-p}\bfu$ is algebraically uniquely defined\,\footnote{The space ${\mathcal C}_G(U)$ need not be closed and there is no assertion that $G^{-1}$ is bounded on ${\mathcal C}_G(U)$.} for each $\bfu \in {\mathcal C}_G(U)$;
\item ${\mathcal C}_G(U) \subset G^\infty(U) \coloneqq \bigcap_{n \in {\mathbb N}} G^n(U)$ with ${\mathcal C}_G(U) = G^\infty(U)$ whenever $G(G^\infty(U)) = G^\infty(U)$ which is true, in particular, if $G$ is injective on $U$;
\item if $G$ is quasinilpotent with $\lVert G^p \rVert^{1/p} \to 0$ as $p \to \infty$ and $\bfu_0 \neq \bfzero$ with $\bfu_{p-1} = G \bfu_p$ for all $p \in {\mathbb N}$ then $\lVert \bfu_p \rVert^{1/p} \to \infty$ as $p \to \infty$ and the compatible orbit $\{ \bfu_p\}_{p \in {\mathbb N}}$ is not subexponential;
\item if $G$ is nilpotent ${\mathcal C}_G(U) = \{ \bfzero \}$.
\end{enumerate}
\end{lemma}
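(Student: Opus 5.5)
The plan is to work directly from the definition of ${\mathcal C}_G(U)$, treating a backward orbit $\{\bfu_p\}_{p \in {\mathbb N}-1}$ with $\bfu_{p-1} = G\bfu_p$ as the basic object. Each part then reduces to building or manipulating such orbits.

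For (i), I will note that $\bfzero$ has the zero orbit. If $\bfu_0, \bfv_0$ have orbits $\{\bfu_p\}, \{\bfv_p\}$, then $\{\alpha\bfu_p + \beta\bfv_p\}$ is an orbit for $\alpha\bfu_0 + \beta\bfv_0$ by linearity of $G$.

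For (ii), take $\bfu_0 \in {\mathcal C}_G(U)$ with orbit $\{\bfu_p\}$. The shifted sequence $\{\bfu_{p+1}\}$ is an orbit for $\bfu_1$, so $\bfu_1 \in {\mathcal C}_G(U)$ and $\bfu_0 = G\bfu_1 \in G({\mathcal C}_G(U))$. Conversely, $\{G\bfu_0, \bfu_0, \bfu_1, \ldots\}$ is an orbit for $G\bfu_0$, which gives the reverse inclusion.

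For (iii), suppose $G(M) = M$ and $\bfu_0 \in M$. I will choose $\bfu_1 \in M$ with $G\bfu_1 = \bfu_0$ and continue inductively; this uses countable dependent choice, which is harmless here. The resulting orbit shows $M \subset {\mathcal C}_G(U)$.

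For (iv), injectivity together with (ii) makes $G|_{{\mathcal C}_G(U)}$ a bijection. Each orbit stays inside the core, since $\bfu_p$ has the shifted orbit, so each $\bfu_p$ is forced to be the unique preimage of $\bfu_{p-1}$ in the core. Hence $\bfu_p = G^{-p}\bfu_0$ is uniquely determined.

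For (v), the inclusion is immediate because $\bfu_0 = G^n\bfu_n \in G^n(U)$ for every $n$. If $G(G^\infty(U)) = G^\infty(U)$, then (iii) applied with $M = G^\infty(U)$ gives equality. In the injective case I must show $G(G^\infty(U)) = G^\infty(U)$.
\begin{itemize}
\item The inclusion $G(G^\infty(U)) \subset G^\infty(U)$ is clear.
\item For the reverse inclusion, take $\bfv \in G^\infty(U)$ and write $\bfv = G\bfw$. For each $n$ we also have $\bfv = G^{n+1}\bfy_n = G(G^n\bfy_n)$.
\item Injectivity gives $\bfw = G^n\bfy_n$ for all $n$, so $\bfw \in G^\infty(U)$ and $\bfv \in G(G^\infty(U))$.
\end{itemize}
This is the one step needing a small idea, namely using injectivity to transfer membership in every $G^n(U)$ from $\bfv$ to its preimage.

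For (vi), since $\bfu_0 = G^p\bfu_p$ we get $\lVert\bfu_0\rVert \le \lVert G^p\rVert\,\lVert\bfu_p\rVert$. Here $\lVert G^p\rVert > 0$, because otherwise $\bfu_0 = \bfzero$. Then
\[
\lVert\bfu_p\rVert^{1/p} \ge \lVert\bfu_0\rVert^{1/p} \big/ \lVert G^p\rVert^{1/p}.
\]
The numerator tends to $1$ and the denominator tends to $0$, so $\lVert\bfu_p\rVert^{1/p} \to \infty$. This contradicts the subexponential bound $\limsup_p \lVert\bfu_p\rVert^{1/p} \le 1$.

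For (vii), if $G^m = \bigzero$ then $\bfu_0 = G^m\bfu_m = \bfzero$, so ${\mathcal C}_G(U) = \{\bfzero\}$. Equivalently, (v) gives ${\mathcal C}_G(U) \subset G^m(U) = \{\bfzero\}$.

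I expect no serious obstacle beyond the injectivity argument in (v) and the choice-based construction in (iii).
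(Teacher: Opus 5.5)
Your proposal is correct and follows the paper's proof essentially step for step. It uses the same zero and linear-combination orbits in (i), the shifted and prepended orbits in (ii), the inductive construction in (iii), the estimate $\lVert \bfu_0\rVert \le \lVert G^p\rVert\,\lVert\bfu_p\rVert$ in (vi), and the direct nilpotency argument in (vii). In the injective case of (v), your observation that $\bfw = G^n\bfy_n$ for every $n$ supplies the justification for $G^{-1}\bfv \in G^\infty(U)$, which the paper merely asserts, so your version is slightly more complete there.
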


\textbf{Proof.}  Let \(M={\mathcal C}_G(U)\).

(i) The zero sequence $\{ \bfu_p\}_{p \in {\mathbb N}-1}$ with $\bfu_p = \bfzero$ for all $p \in \mathbb{N}-1$ satisfies $\bfu_{p-1} = G\bfu_p$, so $\bfzero \in M$.  If \(\bfu_0, \bfv_0 \in M\), they have compatible orbits \(\{\bfu_p\}_{p \in {\mathbb N}}\) and \(\{\bfv_p\}_{p \in {\mathbb N}}\) satisfying $\bfu_{p-1} = G\bfu_p$ and $\bfv_{p-1} = G\bfv_p$.  Since \(G(\alpha \bfu_{p}+ \beta \bfv_{p})= \alpha G\bfu_{p}+ \beta G\bfv_{p}= \alpha \bfu_{p-1}+\beta \bfv_{p-1}\) it follows that $\alpha \bfu_0 + \beta \bfv_0$ also has a compatible orbit $\{\alpha \bfu_p + \beta \bfv_p\}_{p \in {\mathbb N}}$ and so $\alpha \bfu_0 + \beta \bfv_0 \in M$ for all $\alpha, \beta \in {\mathbb C}$.  Therefore $M$ is a linear subspace.

(ii) Let \(\bfu_0 \in M\). By the definition of $M$, there exists a compatible orbit \(\{\bfu_{p}\}_{p \in {\mathbb N}}\) for $\bfu_0$ such that $\bfu_{p-1}=G \bfu_p$.  The sequence \(\{\bfu_{p+1}\}_{p \in {\mathbb N}}\) forms a compatible orbit for \(\bfu_{1}\) because \(\bfu_p = G\bfu_{p+1}\) for all \(p \in {\mathbb N}\).  Therefore $\bfu_1 \in M \Rightarrow \bfu_0 = G \bfu_1 \in G(M)\).  Thus $M \subset G(M)$.  Let \(\bfv = G \bfu_0 \in G(M)\) for some \(\bfu_0 \in M\).  Since \(\bfu_0 \in M\), it has a compatible orbit \(\{\bfu_p \}_{p \in {\mathbb N}}\).  Therefore \(\bfv_0 = \bfv\) and \(\bfv_p = \bfu_{p-1}\) for \(p \in {\mathbb N}\) defines a compatible orbit $\{ \bfv_p \}_{p \in {\mathbb N}}$ for $\bfv$.  Hence \(\bfv \in M\).  Thus $G(M) \subset M$.  It follows that $G(M) = M$.

(iii) Let \(L \subset U\) be any linear subspace such that \(G(L) = L\). Let $\bfu_0 \in L$.  Because \(G(L) = L\) there exists \(\bfu_1 \in L\) such that \(G \bfu_1 = \bfu_0\).  By mathematical induction there exists \(\{\bfu_p\}_{p \in \mathbb{N}} \subset L\) such that \(\bfu_{p-1} = G \bfu_p\) for all \(p \in \mathbb{N}\).  Thus, \(\bfu_0 \in M\). It follows that \(L \subset M\).  Therefore \(M\) is the largest linear subspace with $G(M)=M$.

(iv) If $\bfu \in M$ there is a unique image $\bfw = G \bfu \in G(M)$ for all $\bfu \in M$.  Now suppose $\bfw \in G(M)$.  If $G$ is injective on $M$ then $G \bfv = G \bfu \Rightarrow \bfv = \bfu$.  Let $\bfu = G^{-1} \bfw$ denote the unique preimage for each $w \in G(M)$.  Thus $G\vert_M: M \to M$ is bijective.  Since $M = G(M)$ we can apply these arguments repeatedly to show that each $\bfu \in M$ has a collection of uniquely defined preimages $G^{-p} \bfu$ for all $p \in {\mathbb N}$.   

(v) If $G$ is injective on $U$ then $\bfv \in G^\infty(U) \subset U \Rightarrow \bfu = G^{-1}\bfv$ is uniquely defined and $\bfu \in G^\infty(U) \Rightarrow \bfv = G \bfu \in G(G^\infty(U)) \Rightarrow G^\infty(U) \subset G(G^\infty(U))$.  Now the obvious inclusion $G(G^\infty(U)) \subset G^\infty(U) \Rightarrow G^\infty(U) = G(G^\infty(U))$.  From (iii) the condition $G(G^\infty(U)) = G^\infty(U) \Rightarrow G^\infty(U) \subset M$.  If $\bfu_0 \in M$ there exists $\{ \bfu_p \}_{p \in {\mathbb N}} \subset U$ with $\bfu_0 = G^p \bfu_p$ for all $p \in {\mathbb N}$.  Therefore $M \subset  G^\infty(U)$.  Thus $G^\infty(U) = M$.

(vi) We have $G^p \bfu_p = \bfu_0 \neq \bfzero$ for all $p \in {\mathbb N}$.  Therefore $\lVert \bfu_0 \rVert \leq \lVert G^p \rVert \cdot \lVert \bfu_p \rVert$.  If $\lVert G^p \rVert^{1/p} \to 0$ as $p \to \infty$ then $\lVert \bfu_p \rVert^{1/p} \geq \| \bfu_0 \|^{1/p} /\lVert G^p \rVert^{1/p} \to \infty$ as $p \to \infty$.

(vii) If $G$ is nilpotent there exists $q \in {\mathbb N}$ such that $G^p \bfu = \bfzero$ for all $\bfu \in U$ and all $p \in {\mathbb N}+q$.  Therefore ${\mathcal C}_G(U) = \{ \bfzero \}$.   $\hfill \Box$

\begin{theorem}
	\label{mr:thm1}
	Choose $t_0 \in {\mathbb Z}$.
	\begin{enumerate}[(i)]
		\item If $\zeta \in \sigma$ and $\lvert \zeta \rvert \in (1, \infty)$ each $\bfc_{\zeta, t_0, -\infty} \in X_\zeta$ defines a unique forward flow solution
	\begin{equation}
		\label{mr:e1}
		\bfx_{\zeta,t} = \mbox{$\sum_{k \in {\mathbb N}}$} (-1)^k R_{\zeta,-k}\, {\bnab_{\zeta}}^{-k} \bveps_{\zeta,t} + {H_\zeta}^{-(t-t_0)} \bfc_{\zeta,t_0,-\infty} \in X_\zeta \quad \mbox{ for all } t \in {\mathbb Z}
		\end{equation}
	where $\bfc_{\zeta,t_0,-\infty} = \lim_{r \rightarrow \infty} {H_\zeta}^{-r-t_0} \bfx_{\zeta,-r}$ represents a boundary value in the far distant past.  The flow is subexponential if and only if $\bfc_{\zeta, t_0, -\infty} = \bfzero$.
	
	\item If $\zeta \in \sigma$ and $\lvert \zeta \rvert \in (0,1)$ each $\bfc_{\zeta, t_0, \infty} \in X_\zeta$ defines a unique backward flow solution
	\begin{equation}
		\label{mr:e2}
		\bfx_{\zeta,t} = \mbox{$\sum_{k \in {\mathbb N}}$} (-1)^k R_{\zeta,-k}\, {\bDelta_{\zeta}}^{-k} \bveps_{\zeta,t+k} + {H_\zeta}^{-(t-t_0)} \bfc_{\zeta,t_0,\infty} \in X_\zeta  \quad \mbox{ for all } t \in {\mathbb Z}
	\end{equation}
	where $\bfc_{\zeta, t_0, \infty} = \lim_{r \rightarrow \infty} {H_\zeta}^{r - t_0} \bfx_{\zeta, r}$ represents a boundary value in the far distant future.  The flow is subexponential if and only if $\bfc_{\zeta, t_0, \infty} = \bfzero$.
	
	\item If $\zeta \in \sigma$ and $\lvert \zeta \rvert = 1$ each $\bfc_{\zeta,t_0} = \bfx_{\zeta,t_0} \in X_\zeta$ defines a unique outward flow solution
	\begin{equation}
		\label{mr:e3}
		\bfx_{\zeta,t} = \left\{ \begin{array}{ll}
			\mbox{$\sum_{k \in {\mathbb N}}$} (-1)^k R_{\zeta,-k}\, {\bnab_{\zeta}}^{-k} \bveps_{\zeta,t_0^+,t} + {H_\zeta}^{-(t-t_0)} \bfc_{\zeta,t_0} \in X_\zeta & \mbox{ for } t \in t_0 + {\mathbb N} \\
			\mbox{$\sum_{k \in {\mathbb N}}$} (-1)^k R_{\zeta,-k}\, {\bDelta_{\zeta}}^{-k} \bveps_{\zeta,t_0^-,t+k} + {H_\zeta}^{(t_0-t)} \bfc_{\zeta,t_0} \in X_\zeta & \mbox{ for } t \in t_0 - {\mathbb N} \end{array} \right.
	\end{equation}
	where $t_0 + {\mathbb N} = \{ t \in {\mathbb Z} \mid t > t_0 \}$ and $t_0 - {\mathbb N} = \{ t \in {\mathbb Z} \mid t < t_0\}$ and where $\bfx_{\zeta,t_0}$ is the initial value.  The flow is always subexponential with ${\bnab_{\zeta}}^{-k} \bveps_{\zeta,t_0^+,t} = \bfzero$ if $t \in t_0 - {\mathbb N} = \{ t \in {\mathbb Z} \mid t < t_0 \}$ and ${\bDelta_{\zeta}}^{-k} \bveps_{\zeta,t_0^-,t+k} = \bfzero$ if $t + k \in t_0+{\mathbb N} = \{ t + k \in {\mathbb Z} \mid t+k > t_0\}$.
	
	\item If $\infty \in \sigma$ the projected equation takes the form $\bfx_{\infty,t} = R_{\infty,0} \bveps_{\infty,t} - G_\infty \bfx_{\infty,t-1}$ where $G_\infty = (R_{\infty,0}A_1)$.  This recursion propagates naturally in the forward direction.  Recovery of a compatible flow for $t \in t_0 - {\mathbb N} = \{ t \in {\mathbb Z} \mid t < t_0\}$ upstream from the observed value $\bfx_{\infty,t_0}$ requires generation of successive preimages under $G_\infty$.  In this context each $\bfc_{\infty, t_0, -\infty} = \lim_{r \rightarrow \infty} (-1)^{r+t_0} G_\infty^{r+t_0} \bfx_{\infty,-r} \in {\mathcal C}_{G_\infty}(X_\infty)$ represents a compatible boundary value in the far distant past and defines a unique forward flow for $t \in t_0 -1 + {\mathbb N} = \{ t \in {\mathbb Z} \mid t \geq t_0 \}$ with
	\begin{equation}
		\label{mr:e4}
		\bfx_{\infty,t} = \mbox{$\sum_{\ell \in {\mathbb N}-1}$} R_{\infty,\ell}\, \bveps_{\infty,t-\ell} + (-1)^{t-t_0} G_\infty^{\,t-t_0} \bfc_{\infty, t_0, -\infty} \in X_\infty.
	\end{equation}
	If the propagating operator $G_\infty\vert_{{\mathcal C}_{G_\infty}(X_\infty)}$ is injective then $\bfx_{\infty,t}$ is uniquely determined by $(\ref{mr:e4})$ for all $t \in {\mathbb Z}$ and the flow is subexponential if and only if $\bfc_{\infty, t_0, -\infty} = \bfzero$.  If $G_\infty\vert_{{\mathcal C}_{G_\infty}(X_\infty)}$ is not injective $(\ref{mr:e4})$ is no longer a valid solution per se when $t \in t_0 - {\mathbb N} = \{ t \in {\mathbb Z} \mid t < t_0\}$ but rather provides a framework within which a valid solution can be constructed.  The flow is no longer uniquely defined and one must construct a compatible upstream flow $\bfx_{c,\infty,t} = (-1)^{t-t_0} \bfu_{t_0-t}$ for $t \in t_0 - {\mathbb N}$ by setting $\bfu_0 = \bfc_{\infty,t_0,-\infty}$ and choosing $\{ \bfu_p \}_{p \in {\mathbb N}}$ with $\bfu_{p-1} = G_\infty \bfu_p$ for all $p \in {\mathbb N}$.  The propagating operator $G_\infty$ is quasinilpotent and hence is not invertible.
	
	\item If $0 \in \sigma$ the projected equation takes the form $\bfx_{0,t} = R_{0,-1}\bveps_{0,t+1} - G_0 \bfx_{0,t+1}$ where $G_0 = (R_{0,-1}A_0)$.  This recursion propagates naturally in the backward direction.  Recovery of a compatible flow for $t \in t_0 + {\mathbb N} = \{ t \in {\mathbb Z} \mid t > t_0\}$ upstream from the observed value $\bfx_{0,t_0}$ requires generation of successive preimages under $G_0$.  In this context each $\bfc_{0,t_0,\infty} = \lim_{r \rightarrow \infty} (-1)^{r-t_0} G_0^{r-t_0} \bfx_{0,r} \in {\mathcal C}_{G_0}(X_0)$ represents a compatible boundary value in the far distant future and defines a unique backward flow for $t \in t_0 +1 - {\mathbb N} = \{ t \in {\mathbb Z} \mid t \leq t_0\}$ with
	\begin{equation}
		\label{mr:e5}
		\bfx_{0,t} = \mbox{$\sum_{k \in {\mathbb N}}$} R_{0,-k}\, \bveps_{0,t+k} + (-1)^{\,t_0-t} G_0^{t_0-t} \bfc_{0, t_0, \infty} \in X_0.
	\end{equation}
	If the propagating operator $G_0\vert_{{\mathcal C}_{G_0}(X_0)}$ is injective then $\bfx_{0,t}$ is uniquely determined by $(\ref{mr:e5})$ for all $t \in {\mathbb Z}$ and the flow is subexponential if and only if $\bfc_{0, t_0, \infty} = \bfzero$.  If $G_0\vert_{{\mathcal C}_{G_0}(X_0)}$ is not injective $(\ref{mr:e5})$ is no longer a valid solution per se when $t \in t_0 + {\mathbb N} = \{ t \in {\mathbb Z} \mid t > t_0\}$ but rather provides a framework within which a valid solution can be constructed.  The flow is no longer uniquely defined and one must construct a compatible upstream flow $\bfx_{c,0,t} = (-1)^{t-t_0} \bfu_{t-t_0}$ for $t \in t_0 + {\mathbb N}$ by setting $\bfu_0 = \bfc_{0,t_0,\infty}$  and choosing $\{ \bfu_p \}_{p \in {\mathbb N}}$ with $\bfu_{p-1} = G_0 \bfu_p$ for all $p \in {\mathbb N}$.  The propagating operator $G_0$ is quasinilpotent and hence is not invertible.
	\end{enumerate} 
\end{theorem}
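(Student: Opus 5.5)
The plan is to reduce each projected equation (\ref{in:e3}) to a first-order recursion on $X_\zeta$ and then solve that recursion explicitly.

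\emph{Reduction for finite $\zeta$.} Take $\zeta\neq\infty$. Multiply (\ref{in:e3}) on the left by $R_{\zeta,-1}$. Write $A_0=A_{\zeta,0}-\zeta A_1$ and use $P_\zeta=R_{\zeta,-1}A_1$, which acts as the identity on $X_\zeta$. This gives
\[
(R_{\zeta,-1}A_{\zeta,0}-\zeta I_X)\bfx_{\zeta,t}+\bfx_{\zeta,t-1}=R_{\zeta,-1}\bveps_{\zeta,t},
\]
that is, $H_\zeta\bfx_{\zeta,t}=\bfx_{\zeta,t-1}-R_{\zeta,-1}\bveps_{\zeta,t}$ when $\zeta\neq0$. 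By (i) of Section~\ref{s:mb}, $R_{\zeta,\mbox{\scriptsize sg}}(z)=[(z-\zeta)I_X+R_{\zeta,-1}A_{\zeta,0}]^{-1}R_{\zeta,-1}$ for all $z\neq\zeta$. Evaluating at $z=0$ shows that $H_\zeta$ is invertible on $X_\zeta$. Expanding that identity at $z=0$ also shows that the geometric series of the recursion reproduces the Laurent coefficients, because $R_{\zeta,-k}=(-1)^{k-1}(R_{\zeta,-1}A_{\zeta,0})^{k-1}R_{\zeta,-1}$. Since $R_{\zeta,-1}A_{\zeta,0}$ is quasinilpotent on $X_\zeta$ (by $\lVert R_{\zeta,-k}\rVert^{1/k}\to0$), the spectrum of $H_\zeta^{-1}$ is $\{\zeta^{-1}\}$ and the spectrum of $H_\zeta$ is $\{\zeta\}$. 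This spectral fact drives all of parts (i)--(iii).

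\emph{Parts (i)--(iii).} For $\lvert\zeta\rvert>1$ I iterate backwards in time:
\[
\bfx_{\zeta,t}=H_\zeta^{-1}\bfx_{\zeta,t-1}-H_\zeta^{-1}R_{\zeta,-1}\bveps_{\zeta,t}.
\]
This gives $\bfx_{\zeta,t}=-\sum_{j=0}^{n-1}H_\zeta^{-j-1}R_{\zeta,-1}\bveps_{\zeta,t-j}+H_\zeta^{-n}\bfx_{\zeta,t-n}$. Since the spectral radius of $H_\zeta^{-1}$ is $\lvert\zeta\rvert^{-1}<1$ and $\bveps$ is subexponential, the sum converges absolutely. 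To rewrite it, expand $H_\zeta^{-j-1}=\zeta^{-j-1}(I-\zeta^{-1}N)^{-j-1}$ with $N=R_{\zeta,-1}A_{\zeta,0}$ and collect powers of $N$. This yields $\sum_k(-1)^kR_{\zeta,-k}\bnab_\zeta^{-k}\bveps_{\zeta,t}$ via (\ref{mr:e0b}); the interchange of summations is justified by the quasinilpotence bound on $N$. The limit $H_\zeta^{-n}\bfx_{\zeta,t-n}$ then exists, and writing it as $H_\zeta^{-(t-t_0)}\bfc_{\zeta,t_0,-\infty}$ gives (\ref{mr:e1}). Conversely, any such formula solves the recursion, which gives uniqueness. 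The complementary term grows like $\lvert\zeta\rvert^{\,t}$ as $t\to-\infty$ because the spectrum of $H_\zeta^{-1}$ is $\{\zeta^{-1}\}$, so it is subexponential iff $\bfc=\bfzero$. I would argue this by Gelfand's formula applied to the nonzero orbit (exactly as in Lemma~\ref{mr:lem1}(vi)), using the fact that $H_\zeta^{n}\bfc$ has $\lVert H_\zeta^n\bfc\rVert^{1/n}\to\lvert\zeta\rvert$ lower bound from $\bfc=H_\zeta^{-n}H_\zeta^n\bfc$. Part (ii) is the mirror argument, iterating forward with $\bfx_{\zeta,t-1}=H_\zeta\bfx_{\zeta,t}+R_{\zeta,-1}\bveps_{\zeta,t}$ and using (\ref{mr:e0f}). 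Part (iii) is the same two iterations, each run finitely from $t_0$ with truncated innovations, so no convergence issue arises. Subexponentiality there follows from the fact that $H_\zeta^{\pm n}$ grow subexponentially when $\lvert\zeta\rvert=1$.

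\emph{Parts (iv) and (v).} For $\zeta=\infty$, multiply (\ref{in:e3}) by $R_{\infty,0}$ and use $P_\infty=R_{\infty,0}A_0$ to obtain $\bfx_{\infty,t}=R_{\infty,0}\bveps_{\infty,t}-G_\infty\bfx_{\infty,t-1}$. Quasinilpotence of $G_\infty$ follows from the entire series $R_{\infty,\mbox{\scriptsize sg}}$, since $R_{\infty,\ell}=(-1)^\ell G_\infty^\ell R_{\infty,0}$ by the analogue of (iv) in Section~\ref{s:mb}, transported via $S$. Iterating backward gives
\[
\bfx_{\infty,t}=\sum_{\ell=0}^{n-1}R_{\infty,\ell}\bveps_{\infty,t-\ell}+(-1)^nG_\infty^n\bfx_{\infty,t-n}.
\]
The sum converges by the quasinilpotence bound, so the remainder has a limit. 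Rewriting the remainder at base time $t_0$ shows the limit lies in $G_\infty^{\infty}$-images and satisfies the compatibility requirement, so it lies in ${\mathcal C}_{G_\infty}(X_\infty)$; this gives (\ref{mr:e4}) for $t\ge t_0$. For $t<t_0$ one needs preimages, and Lemma~\ref{mr:lem1}(iv) gives uniqueness in the injective case. Lemma~\ref{mr:lem1}(vi) gives non-subexponentiality when $\bfc\neq\bfzero$, and the non-injective case is just the construction of an orbit in the core. Part (v) is symmetric, using $R_{0,-1}$ and $P_0=R_{0,-1}A_1$.

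The main obstacle is identifying the boundary constant as a limit lying in the algebraic core and justifying the resummation into the $R_{\zeta,-k}$ form; I would handle the latter first, via the resolvent identity at $z=0$.
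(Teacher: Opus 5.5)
Your proposal is correct and follows essentially the paper's route: you derive first-order recursions by multiplying by $R_{\zeta,-1}$ or $R_{\infty,0}$, iterate and pass to the limit to isolate the boundary constant, resum binomially in powers of the quasinilpotent $R_{\zeta,-1}A_{\zeta,0}$, and invoke the algebraic-core lemma for $\zeta=0,\infty$. Using that $P_\zeta$ is the identity on $X_\zeta$ is a shortcut past the paper's Neumann-series computation, and your bound $\liminf_{n\to\infty}\lVert H_\zeta^{n}\bfc\rVert^{1/n}\ge\lvert\zeta\rvert$ makes explicit a step the paper only asserts (one small slip: the complementary term in part (i) grows like $\lvert\zeta\rvert^{-t}$, not $\lvert\zeta\rvert^{t}$, as $t\to-\infty$).
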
 

\textbf{Proof.}  Some standard algebra has been relegated to the Appendix.

\paragraph{Part $(i)$.} If $\zeta \in \sigma$ and $\lvert \zeta \rvert \in (1, \infty)$ then Lemma~\ref{sar:lem1} (i) shows that $\bfx_\zeta$ satisfies the forward recursion $\bfx_{\zeta,t} = -H_\zeta^{-1}R_{\zeta,-1}\bveps_{\zeta,t} + H_\zeta^{-1}\bfx_{\zeta,t-1}$ for all $t \in {\mathbb Z}$.  Suppose $t \in {\mathbb Z}$ is fixed.  Repeated application of the recursion shows that
\begin{equation}
	\label{mr:e6}
	\bfx_{\zeta,t} = (-1) \mbox{$\sum_{\ell=0}^{t+r-1}$} {H_\zeta}^{-\ell-1} R_{\zeta,-1} \bveps_{\zeta,t-\ell} + {H_\zeta}^{-r-t} \bfx_{\zeta,-r} \quad \mbox{ for all } r \in -t + {\mathbb N}.
\end{equation}
The expression $[z I_X - H_\zeta]^{-1} = [(z - \zeta)I_X + R_{\zeta,-1}A_{\zeta,0}]^{-1}$ is analytic for all $z \neq \zeta$ because $R_{\zeta,-1}A_{\zeta,0}$ is quasinilpotent so the Maclaurin series $[ z I_X - H_\zeta]^{-1} = (-1) \mbox{$\sum_{\ell \in {\mathbb N}-1}$} z^{\ell} {H_\zeta}^{-\ell-1}$ converges for $\lvert z \rvert < \lvert \zeta \rvert$ with $\lim_{\ell \to \infty} \lVert {H_\zeta}^{-\ell} \rVert^{1/\ell} = 1/\lvert \zeta \rvert < 1$.  If $\bveps$ is subexponential $\sum_{\ell \in {\mathbb N}-1} {H_\zeta}^{-\ell-1} R_{\zeta,-1} \bveps_{\zeta, t-\ell}$ converges.  Letting $r \to \infty$ in (\ref{mr:e6}) gives
\begin{equation*}
	\bfx_{\zeta,t} = (-1) \mbox{$\sum_{\ell \in {\mathbb N}-1}$} {H_\zeta}^{-\ell-1} R_{\zeta,-1} \bveps_{\zeta, t-\ell}  + {H_\zeta}^{-(t-t_0)} \bfc_{\zeta, t_0,- \infty} \quad \mbox{ for all } t \in {\mathbb Z}
\end{equation*}
where $\bfc_{\zeta,t_0, -\infty} = \lim_{r \rightarrow \infty} {H_\zeta}^{-r-t_0} \bfx_{\zeta,-r} \in X_\zeta$.  The general solution $\bfx_{\zeta,t}$ is the sum of a particular cumulation series $\bfx_{p,\zeta,t} = (-1) \mbox{$\sum_{\ell \in {\mathbb N}-1}$} {H_\zeta}^{-\ell-1} R_{\zeta,-1} \bveps_{\zeta, t-\ell}$ and a complementary series $\bfx_{c,\zeta,t} = {H_\zeta}^{-(t-t_0)} \bfc_{\zeta, t_0,- \infty}$ for all $t \in {\mathbb Z}$.  The series $\bfx_{p,\zeta}$ is subexponential because $\lim_{\ell \to \infty} \lVert {H_\zeta}^{-\ell} \rVert^{1/\ell} < 1$.  Thus $\bfx_{\zeta}$ is subexponential if and only if $\bfc_{\zeta, t_0,- \infty} = \bfzero$.  Lemma~\ref{sar:lem2}~(i) shows that $\bfx_{p,\zeta,t}$ can be rewritten as a sum of inverse powers of weighted backward differences in the form
\begin{equation*}
\bfx_{p,\zeta,t} =  \mbox{$\sum_{k \in {\mathbb N}}$} (-1)^k R_{\zeta,-k}\, {\bnab_{\zeta}}^{-k} \bveps_{\zeta,t} \quad \mbox{ for all } t \in {\mathbb Z}.
\end{equation*}
 
\paragraph{Part $(ii)$.}  If $\zeta \in \sigma$ and $\lvert \zeta \rvert \in (0,1)$ then Lemma~\ref{sar:lem1} (i) shows that $\bfx_\zeta$ satisfies the backward recursion $\bfx_{\zeta,t} = R_{\zeta,-1} \bveps_{\zeta, t+1} + H_\zeta \bfx_{\zeta, t+1}$ for all $t \in {\mathbb Z}$.  Suppose $t \in {\mathbb Z}$ is fixed.  Repeated application of the recursion gives
\begin{equation}
\label{mr:e7}
\bfx_{\zeta, t} = \mbox{$\sum_{\ell=1}^{r-t}$} {H_\zeta}^{\ell -1} R_{\zeta,-1} \bveps_{\zeta, t+\ell} + {H_\zeta}^{r-t} \bfx_{\zeta, r} \quad \mbox{ for all } r \in t+ {\mathbb N}.
\end{equation}
The expression $[z I_X - H_\zeta]^{-1} = [(z - \zeta)I_X + R_{\zeta,-1}A_{\zeta,0}]^{-1}$ is analytic for all $z \neq \zeta$ because $R_{\zeta,-1}A_{\zeta,0}$ is quasinilpotent so the Laurent series $[z I_X - H_\zeta]^{-1} = \mbox{$\sum_{\ell \in {\mathbb N}}$} z^{-\ell} {H_{\zeta}}^{\ell-1}$ converges for $\lvert z \rvert > \lvert \zeta \rvert$.  Therefore $\lim_{\ell \to \infty} \lVert {H_\zeta}^{\ell} \rVert^{1/\ell} = \lvert \zeta \rvert < 1$.  If $\bveps$ is subexponential it follows that $\sum_{\ell \in {\mathbb N}} {H_\zeta}^{\ell -1} R_{\zeta,-1} \bveps_{\zeta, t+\ell}$ converges.  Letting $r \to \infty$ in (\ref{mr:e7}) gives 
\begin{equation*}
\bfx_{\zeta, t} = \mbox{$\sum_{\ell \in {\mathbb N}}$} {H_\zeta}^{\ell -1} R_{\zeta,-1} \bveps_{\zeta, t+\ell} + {H_\zeta}^{-(t-t_0)} \bfc_{\zeta, t_0, \infty} \quad \mbox{ for all } t \in {\mathbb Z}
\end{equation*}
where $\bfc_{\zeta, t_0, \infty} = \lim_{r \rightarrow \infty} {H_\zeta}^{r - t_0} \bfx_{\zeta, r} \in X_\zeta$.  The general solution $\bfx_{\zeta,t}$ is the sum of a particular cumulation series $\bfx_{p,\zeta,t} = \mbox{$\sum_{\ell \in {\mathbb N}}$} {H_\zeta}^{\ell -1} R_{\zeta,-1} \bveps_{\zeta, t+\ell}$ and a complementary series $\bfx_{c,\zeta,t} =  {H_\zeta}^{-(t-t_0)} \bfc_{\zeta, t_0, \infty}$ for all $t \in {\mathbb Z}$.  The series $\bfx_{p,\zeta}$ is subexponential because $\lim_{\ell \to \infty} \lVert {H_\zeta}^\ell \rVert^{1/\ell} < 1$.  Thus $\bfx_{\zeta}$ is subexponential if and only if $\bfc_{\zeta, t_0, \infty} = \bfzero$. Lemma~\ref{sar:lem2}~(ii) shows that $\bfx_{p,\zeta,t}$ can be rewritten as a sum of inverse powers of weighted forward differences in the form
\begin{equation*}
\bfx_{p,\zeta,t} = \mbox{$\sum_{k \in {\mathbb N}}$}\, (-1)^k R_{\zeta,-k}\, {\bDelta_{\zeta}}^{-k} \bveps_{\zeta, t+k}  \quad \mbox{ for all } t \in {\mathbb Z}.
\end{equation*}

\paragraph{Part $(iii)$.}  If $\zeta \in \sigma$ with $\lvert \zeta \rvert = 1$ then Lemma~\ref{sar:lem1}~(i) shows that $\bfx_\zeta$ satisfies the forward recursion $\bfx_{\zeta,t} = -H_\zeta^{-1}R_{\zeta,-1}\bveps_{\zeta,t} + H_\zeta^{-1}\bfx_{\zeta,t-1}$.  Repeated application of the recursion starting at $t_0$ shows that the forward branch of the outward flow is 
\begin{equation*}
\bfx_{\zeta, t} = (-1) \mbox{$\sum_{\ell=0}^{t-t_0-1}$} H_\zeta^{-\ell-1} R_{\zeta,-1} \bveps_{\zeta, t-\ell} + H_\zeta^{-(t-t_0)} \bfc_{\zeta,t_0} \quad \mbox{ for all } t \in t_0 + {\mathbb N}
\end{equation*}
where $\bfc_{\zeta, t_0} = \bfx_{\zeta,t_0} \in X_{\zeta}$.   The branch is the sum of a particular cumulation series $\bfx_{p,\zeta,t} = (-1) \mbox{$\sum_{\ell=0}^{t-t_0-1}$} H_\zeta^{-\ell-1} R_{\zeta,-1} \bveps_{\zeta, t-\ell}$ and a complementary series $\bfx_{c,\zeta,t} = H_\zeta^{-(t-t_0)} \bfc_{\zeta,t_0}$ for all $t \in t_0 + {\mathbb N}$.  The Maclaurin series $[z I_X - H_\zeta]^{-1} = (-1) \mbox{$\sum_{\ell \in {\mathbb N}-1}$} z^\ell {H_\zeta}^{-\ell-1}$ is valid for $\lvert z \rvert < \lvert \zeta \rvert = 1$.   Therefore $\lim_{\ell \to \infty} \lVert H_\zeta^{-\ell} \rVert^{1/\ell} = 1$ and so $\bfx_{c,\zeta,t}$ is subexponential for $t \in t_0 + {\mathbb N}$.  If $\bveps$ is subexponential then $\bfx_{p,\zeta,t}$ and $\bfx_{\zeta,t}$ are also subexponential for $t \in t_0 + {\mathbb N}$.  Lemma~\ref{sar:lem2}~(iii) shows that $\bfx_{p,\zeta,t}$ can be rewritten as a sum of inverse powers of weighted truncated backward differences in the form
\begin{equation*}
\bfx_{p,\zeta,t} = \mbox{$\sum_{k \in {\mathbb N}}$} (-1)^k R_{\zeta,-k}\, {\bnab_{\zeta}}^{-k} \bveps_{\zeta,t_0^+,t} \quad \mbox{ for all } t \in t_0 + {\mathbb N}.
\end{equation*}
Lemma~\ref{sar:lem1}~(i) also shows that $\bfx_\zeta$ satisfies the backward recursion $\bfx_{\zeta,t} = R_{\zeta,-1}\bveps_{\zeta,t+1} + H_\zeta \bfx_{\zeta,t+1}$.  Repeated application of the recursion starting at $t_0$ shows that the backward branch of the outward flow is
\begin{equation*}
\bfx_{\zeta, t} = \mbox{$\sum_{\ell=1}^{-(t-t_0)}$} H_\zeta^{\ell-1} R_{\zeta,-1} \bveps_{\zeta, t+\ell} + H_\zeta^{\,t_0-t} \bfc_{\zeta,t_0} \quad \mbox{ for all } t \in t_0 - {\mathbb N}
\end{equation*}
where $\bfc_{\zeta, t_0} = \bfx_{\zeta,t_0} \in X_{\zeta}$.  The branch is the sum of a particular cumulation series $\bfx_{p,\zeta,t} = \mbox{$\sum_{\ell=1}^{-(t-t_0)}$} H_\zeta^{\ell-1} R_{\zeta,-1} \bveps_{\zeta, t+\ell}$ and a complementary series $\bfx_{c,\zeta,t} = H_\zeta^{\,t_0-t} \bfc_{\zeta,t_0}$ for all $t \in t_0 - {\mathbb N}$.  The Laurent series $[z I_X - H_\zeta]^{-1} = \mbox{$\sum_{\ell \in {\mathbb N}}$} z^{-\ell} {H_{\zeta}}^{\ell-1}$ is valid for $\lvert z \rvert > \lvert \zeta \rvert = 1$.  Therefore $\lim_{\ell \to \infty} \lVert H_\zeta^\ell \rVert^{1/\ell} = 1$ and so $\bfx_{c,\zeta,t}$ is subexponential for $t \in t_0 - {\mathbb N}$.  If $\bveps_{\zeta}$ is subexponential then $\bfx_{p,\zeta,t}$ and $\bfx_{\zeta,t}$ are also subexponential for $t \in t_0 - {\mathbb N}$.  Lemma~\ref{sar:lem2}~(iv) shows that $\bfx_{p,\zeta,t}$ can be rewritten as a sum of inverse powers of weighted truncated forward differences in the form
\begin{equation*}
\bfx_{p,\zeta,t} = \mbox{$\sum_{k \in {\mathbb N}}$} (-1)^k R_{\zeta,-k}\, {\bDelta_{\zeta}}^{-k} \bveps_{\zeta,t_0^-,t+k} \quad \mbox{ for all } t \in t_0 - {\mathbb N}.
\end{equation*}

\paragraph{Part $(iv)$.}  If $\infty \in \sigma$ the forward recursion $\bfx_{\infty,t} = R_{\infty,0}\bveps_{\infty,t} - G_\infty \bfx_{\infty,t-1}$ for all $t \in {\mathbb Z}$ is derived in Lemma~\ref{sar:lem1}~(ii).  Repeated application of the recursion gives
\begin{equation}
\label{mr:e8}
\bfx_{\infty, t} = \mbox{$\sum_{\ell = 0}^{t+r-1}$} R_{\infty,\ell} \bveps_{\infty, t-\ell} + (-1)^{r+t} G_\infty^{r+t} \bfx_{\infty, -r} \quad \mbox{ for all } r \in -t + {\mathbb N}
\end{equation}
where we have used the formula $R_{\infty,\ell} = (-1)^{\ell} G_\infty^{\ell} R_{\infty,0}$ for all $\ell \in {\mathbb N}-1$.  The singular part of the Laurent series for $R(z)$ at $\infty$ is given by $R_{\infty, \mbox{\scriptsize\textup{sg}}}(z) = \sum_{\ell \in {\mathbb N}-1} z^{\ell}R_{\infty,\ell}$ and is analytic for all $z \neq \infty$.  Therefore $\lim_{\ell \to \infty} \lVert R_{\infty,\ell} \rVert^{1/\ell} = 0$.   If $\bveps_{\infty}$ is subexponential it follows that the series $\sum_{\ell \in {\mathbb N}-1} R_{\infty,\ell} \bveps_{\infty, t-\ell}$ converges for all $t \in {\mathbb Z}$.  The limit as $r \to \infty$ in (\ref{mr:e8}) shows that the forward flow is given by
\begin{equation}
\label{mr:e9}
\bfx_{\infty,t} =  \mbox{$\sum_{\ell \in {\mathbb N}-1}$} R_{\infty, \ell}\, \bveps_{\infty,t-\ell} + (-1)^{t-t_0} G_\infty^{t-t_0} \bfc_{\infty, t_0, -\infty} \quad \mbox{ for all } t \in t_0 - 1 + {\mathbb N}
\end{equation}
where $\bfc_{\infty, t_0, -\infty} = \lim_{r \rightarrow \infty} (-1)^{r+t_0} G_\infty^{r+t_0} \bfx_{\infty,-r} \in X_\infty$.   The forward flow in (\ref{mr:e9}) is the sum of a particular cumulation series $\bfx_{p,\infty,t} = \mbox{$\sum_{\ell \in {\mathbb N}-1}$} R_{\infty, \ell}\, \bveps_{\infty,t-\ell}$ and a complementary series $\bfx_{c,\infty,t} = (-1)^{t-t_0} G_\infty^{t-t_0} \bfc_{\infty, t_0, -\infty}$.  The forward branch (\ref{mr:e9}) admits a two-sided extension if and only if $\bfc_{\infty, t_0, -\infty} \in {\mathcal C}_{G_\infty}(X_\infty)$.  The propagating operator $G_\infty$ is quasinilpotent and in particular cases may be nilpotent.  If $G_\infty$ is nilpotent then $\bfc_{\infty,t_0,-\infty} = \bfzero$ and $\bfx_{\infty,t}$ is uniquely determined for all $t \in {\mathbb Z}$.  If $G_\infty$ is quasinilpotent then $\bfx_{c,\infty,t}$ and hence also $\bfx_{\infty,t}$ may not be uniquely defined by (\ref{mr:e9}) for $t \in t_0 - {\mathbb N}$.  Thus the solution is not necessarily subexponential.  If $G_\infty$ is injective on ${\mathcal C}_{G_\infty}(X_\infty)$ then $\bfx_{c,\infty,t}$ and hence also $\bfx_{\infty,t}$ are uniquely determined for all $t \in {\mathbb Z}$.  In this case Lemma~\ref{mr:lem1}~(vi) shows that the flow is subexponential if and only if $\bfc_{\infty,t_0,-\infty} = \bfzero$.

\paragraph{Part $(v)$.}  If $0 \in \sigma$ the backward recursion $\bfx_{0,t} = R_{0,-1}\bveps_{0,t+1} - G_0 \bfx_{0,t+1}$ for all $t \in {\mathbb Z}$ is derived in Lemma~\ref{sar:lem1}~(iii).  Repeated application of the recursion gives
\begin{equation}
\label{mr:e10}
\bfx_{0, t} = \mbox{$\sum_{k=1}^{r-t}$} R_{0,-k} \bveps_{0, t+k} + (-1)^{r-t} G_0^{r-t} \bfx_{0, r} \quad \mbox{ for all } r \in t + {\mathbb N}
\end{equation}
where we have used the formula $R_{0,-k} = (-1)^{k-1} G_0^{k-1} R_{0,-1}$ for all $k \in {\mathbb N}$.  The singular part of the Laurent series for $R(z)$ at $0$ is given by $R_{0, \mbox{\scriptsize\textup{sg}}}(z) = \sum_{k \in {\mathbb N}} z^{-k}R_{0,-k}$ and is analytic for all $z \neq 0$.  Therefore $\lim_{k \to \infty} \lVert R_{0,-k} \rVert^{1/k} = 0$.   If $\bveps_0$ is subexponential it follows that the series $\mbox{$\sum_{k \in {\mathbb N}}$} R_{0,-k}\, \bveps_{0, t+k}$ converges for all $t \in {\mathbb Z}$.  The limit as $r \to \infty$ in (\ref{mr:e10}) shows that the backward flow is given by
\begin{equation}
\label{mr:e11}
\bfx_{0,t} =  \mbox{$\sum_{k \in {\mathbb N}}$} R_{0,-k}\, \bveps_{0,t+k} + (-1)^{t_0-t} G_0^{t_0-t} \bfc_{0, t_0, \infty} \quad \mbox{ for all } t \in t_0 + 1 - {\mathbb N}
\end{equation}
where $\bfc_{0, t_0, \infty} = \lim_{r \rightarrow \infty} (-1)^{r-t_0} G_0^{r-t_0} \bfx_{0,r} \in X_0$.   The backward flow is the sum of a particular cumulation series $\bfx_{p,0,t} = \mbox{$\sum_{k \in {\mathbb N}}$} R_{0,-k}\, \bveps_{0,t+k}$ and a complementary series $\bfx_{c,0,t} = (-1)^{t_0-t} G_0^{t_0-t} \bfc_{0, t_0, \infty}$.  The backward branch (\ref{mr:e11}) admits a two-sided extension if and only if $\bfc_{0, t_0, \infty} \in {\mathcal C}_{G_0}(X_0)$.  The propagating operator $G_0$ is quasinilpotent and in particular cases may be nilpotent.  If $G_0$ is nilpotent $\bfc_{0,t_0,\infty} = \bfzero$ and $\bfx_{0,t}$ is uniquely determined for all $t \in {\mathbb Z}$.  If $G_0$ is quasinilpotent $\bfx_{c,0,t}$ and hence also $\bfx_{0,t}$ may not be uniquely determined for $t \in t_0 + {\mathbb N}$.  Thus the solution is not necessarily subexponential.  If $G_0$ is injective on ${\mathcal C}_{G_0}(X_0)$ then $\bfx_{c,0,t}$ and hence also $\bfx_{0,t}$ are uniquely determined for all $t \in {\mathbb Z}$.   In this case Lemma~\ref{mr:lem1} shows that the flow is subexponential if and only if $\bfc_{0,t_0,\infty} = \bfzero$.   $\hfill \Box$ 

\begin{example}
\label{ex:1}
{\rm Define $X = L^2([0,1])$.  Let $A_0 = V \in {\mathcal B}(X)$ be the Volterra operator defined by $V \bfx(r) = \int_{[0,r]} \bfx(s) ds$ for all $ r \in [0,1]$ and let $A_1 = I \in {\mathcal B}(X)$ be the identity operator.  The operator $V$ is quasinilpotent and so $R(z) =  (V + zI)^{-1} = \sum_{k \in {\mathbb N}} (-1)^{k-1} z^{-k}V^{k-1}$ for all $z \neq 0$.  Therefore $R(z)$ is analytic everywhere except for an isolated essential singularity at $\zeta = 0$.  It follows that $\sigma = \{ 0 \}$.  If $V \bfx = \bfzero$ then $\bfx = \bfzero$.  Therefore $0 \in \sigma$ is not a generalized eigenvalue.  We have $P_0 = R_{0,-1}A_1 = I$ and so there is only one natural component $\bfx_0 = P_0 \bfx = \bfx$.  Let $t_0 = 0$.  The solution to (\ref{in:e1}) is a backward flow given by  
\begin{equation}
\label{mr:e12}
\bfx_{0,t} = \mbox{$\sum_{k \in {\mathbb N}}$} (-1)^{k-1} V^{k-1} \bveps_{0, t+k} + (-1)^t V^{-t} \bfc_{0, 0, \infty} \quad \mbox{ for all } t \in 1 - {\mathbb N}
\end{equation}
where $\bfc_{0, 0, \infty} = \lim_{r \to \infty}(-1)^rV^r \bfx_{0,r} \in {\mathcal C}_V(X)$.  If $V \bfx = \bfy \in X$ then $\bfy$ has an absolutely continuous representative with $\bfy(0) = \bfzero$ and $\bfx = \partial^1 \bfy$ where $\partial^{1} \bfy$ denotes the weak derivative.  Hence $V$ is injective.  There are infinitely many $\bfy \in L^2([0,1])$ with no weak derivative in $L^2([0,1])$ and so $V$ is not surjective.  The core is given by
\begin{equation*}
{\mathcal C}_V(X) = \mbox{$\bigcap_{n \in {\mathbb N}}$} V^n(L^2([0,1])) = \{ \bff \in \mbox{$\bigcap_{n \in {\mathbb N}}$} H^n([0,1]) \mid \partial^{\,r} \bff(0) = 0 \mbox{ for } r \in {\mathbb N} -1 \}
\end{equation*}	
and each $\bff \in {\mathcal C}_V(X)$ has a representative in ${\mathcal C}^{\infty}([0,1])$.  If $\bfy_0(0) = 0$ and $\bfy_0(s) = e^{-1/s}$ for $s \in (0,1]$ then $\partial^{\,r} \bfy_0(0) = 0$ for $r \in {\mathbb N}-1$ and so $\bfy_0 \in {\mathcal C}_V(X) \neq \{ \bfzero\}$.  Nevertheless $V^{-r} \bfc_{0,0,\infty} = \partial^{\,r} \bfc_{0,0,\infty} \in {\mathcal C}_V(X)$ is well defined as a weak derivative in the Sobolev sense for $r \in {\mathbb N}-1$.  Thus $\bfx_{0,t}$ is uniquely defined by (\ref{mr:e12}) for all $t \in {\mathbb Z}$.} $\hfill \Box$
\end{example} 

\begin{appendix}

\section{Supplementary algebraic results}
\label{a:sar}

\begin{lemma}
\label{sar:lem1}
Let $\zeta \in \sigma$.  The projected time series $\bfx_{\zeta,t}$ satisfies the following recursions.
\begin{enumerate}[(i)]
\item If $\zeta \neq 0, \infty$ then $\bfx_{\zeta,t} = -H_\zeta^{-1}R_{\zeta,-1}\bveps_{\zeta,t} + H_\zeta^{-1}\bfx_{\zeta,t-1}$ and $\bfx_{\zeta,t} = R_{\zeta,-1}\bveps_{\zeta,t+1} + H_\zeta \bfx_{\zeta,t+1}$ for all $t \in {\mathbb Z}$ where $H_\zeta = \zeta I_X - R_{\zeta,-1}A_{\zeta,0}$. 
\item If $\zeta = \infty$ then $\bfx_{\infty,t} = R_{\infty,0} \bveps_{\infty,t} - G_\infty \bfx_{\infty,t-1}$ for all $t \in {\mathbb Z}$ where $G_\infty = R_{\infty,0}A_1$.
\item If $\zeta = 0$ then $\bfx_{0,t} = R_{0,-1}\bveps_{0,t+1} - G_0 \bfx_{0,t+1}$ for all $t \in {\mathbb Z}$ where $G_0 = R_{0,-1}A_0$.
\end{enumerate} 
\end{lemma}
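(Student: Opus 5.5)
Starting from the projected equation (\ref{in:e3}), $A_0\bfx_{\zeta,t} + A_1\bfx_{\zeta,t-1} = \bveps_{\zeta,t}$, I will left-multiply by a suitable Laurent coefficient at $\zeta$. Then I simplify using the fundamental equations (\ref{mb:e2}), the projection identities (v)--(vii), and the fact that $\bfx_{\zeta,t} \in X_\zeta$, so $P_\zeta \bfx_{\zeta,t} = \bfx_{\zeta,t}$.

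For part (i), with $\zeta \neq 0,\infty$, I rewrite the equation as $A_{\zeta,0}\bfx_{\zeta,t} + A_1(\bfx_{\zeta,t-1} - \zeta\bfx_{\zeta,t}) = \bveps_{\zeta,t}$. I then left-multiply by $R_{\zeta,-1}$. The case $j=-1$ of the left fundamental equations gives $R_{\zeta,-2}A_1 + R_{\zeta,-1}A_{\zeta,0} = 0$, but the more useful input is the right one together with $R_{\zeta,-1}A_1 = P_\zeta$. This yields
\[
R_{\zeta,-1}A_{\zeta,0}\bfx_{\zeta,t} + P_\zeta(\bfx_{\zeta,t-1} - \zeta \bfx_{\zeta,t}) = R_{\zeta,-1}\bveps_{\zeta,t}.
\]
Since $P_\zeta$ acts as the identity on $X_\zeta$, this rearranges to $-H_\zeta \bfx_{\zeta,t} + \bfx_{\zeta,t-1} = R_{\zeta,-1}\bveps_{\zeta,t}$. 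Both stated recursions follow: shifting $t \mapsto t+1$ gives the backward form, and applying $H_\zeta^{-1}$ gives the forward form.

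It remains to justify that $H_\zeta$ is invertible. The operator $R_{\zeta,-1}A_{\zeta,0}$ is quasinilpotent, because by (iii) its powers give $R_{\zeta,-k}$ up to a factor and $\lVert R_{\zeta,-k}\rVert^{1/k} \to 0$. Hence $\zeta I_X - R_{\zeta,-1}A_{\zeta,0}$ is invertible for $\zeta \neq 0$. Indeed, the singular part formula (i) already expresses $[(z-\zeta)I_X + R_{\zeta,-1}A_{\zeta,0}]^{-1}$ as analytic for all $z\ne\zeta$, and evaluating at $z=0$ gives $-H_\zeta^{-1}$.

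For part (iii), with $\zeta = 0$, we have $A_{0,0} = A_0$. Left-multiplying the equation at time $t+1$ by $R_{0,-1}$ gives $G_0\bfx_{0,t+1} + P_0\bfx_{0,t} = R_{0,-1}\bveps_{0,t+1}$, which is the claimed recursion.

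For part (ii), with $\zeta=\infty$, I left-multiply by $R_{\infty,0}$ and use $P_\infty = R_{\infty,0}A_0$ acting as the identity on $X_\infty$. This gives $\bfx_{\infty,t} + G_\infty \bfx_{\infty,t-1} = R_{\infty,0}\bveps_{\infty,t}$.

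The main obstacle is that the Laurent identities for the $\infty$ case, $P_\infty = R_{\infty,0}A_0$ and the analogue of (vii), are only stated via the substitution $S(w)$. I will derive them from the fundamental equations for $S$ at $w=0$ with $B_0=A_1$ and $B_1=A_0$, translating via $R_{\infty,j} = S_{0,-j-1}$. For the other cases, I will verify that the quasinilpotence claims needed for invertibility follow from the magnitude constraint on $R_{\zeta,-k}$.
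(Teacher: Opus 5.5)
Your argument is correct. For parts (ii) and (iii) it is essentially the paper's argument: left-multiply by $R_{\infty,0}$, respectively $R_{0,-1}$, and use $P_\infty = R_{\infty,0}A_0$ and $P_0 = R_{0,-1}A_1$.

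One difference runs through all three parts. You start from the projected equation (\ref{in:e3}) rather than (\ref{in:e1}), so $P_\zeta$ acting as the identity on $X_\zeta$ does all the bookkeeping. The paper instead needs $R_{\zeta,-1} = R_{\zeta,-1}Q_\zeta$ and, at $\infty$, the identity $R_{\infty,0}A_1 = R_{\infty,0}A_1R_{\infty,0}A_0$. The price is that you rely on $Q_\infty A_i = A_iP_\infty$. Your plan to transfer (vii) from $S(w)$ at $w=0$ supplies this, since $S_{0,-1}B_1 = R_{\infty,0}A_0 = P_\infty$ and $B_1S_{0,-1} = A_0R_{\infty,0} = Q_\infty$.

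The genuine difference is in part (i). The paper multiplies (\ref{in:e1}) by $H_\zeta^{-1}R_{\zeta,-1}$. It then proves $H_\zeta^{-1}R_{\zeta,-1}A_0 = -P_\zeta$ by expanding $H_\zeta^{-1}$ in a Neumann series and cancelling termwise with the fundamental equations (\ref{mb:e2}). This gives the forward recursion first, and the backward one by multiplying through by $H_\zeta$.

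Your split $A_0 = A_{\zeta,0} - \zeta A_1$ gives $R_{\zeta,-1}A_0 = -H_\zeta$ on $X_\zeta$ with no series at all. As a result the backward recursion holds without any invertibility. Quasinilpotence of $R_{\zeta,-1}A_{\zeta,0}$ is needed only to pass to the forward form, and you derive it correctly from $(R_{\zeta,-1}A_{\zeta,0})^k = (-1)^{k-1}R_{\zeta,-k}A_{\zeta,0}$ and the magnitude constraint. As a bonus, the same computation at $\zeta = 0$ gives $H_0 = -G_0$ and reproduces (iii) directly.
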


\textbf{Proof.}

(i) For $\zeta \neq 0, \infty$ the operator $R_{\zeta,-1}A_{\zeta,0}$ is quasinilpotent.  The Neumann expansion and the properties of the Laurent series coefficients  show that
\begin{align*}
{H_\zeta}^{-1}R_{\zeta,-1}A_0 & = [\zeta I_X - R_{\zeta,-1}A_{\zeta,0}]^{-1}R_{\zeta,-1}(A_{\zeta,0} - \zeta A_1) \\
& = [ \mbox{$\sum_{k \in {\mathbb N}}$}\, \zeta^{-k}(R_{\zeta,-1}A_{\zeta,0})^{k-1} ]R_{\zeta,-1}[A_{\zeta,0} - \zeta  A_1] \\
& = - R_{\zeta,-1}A_1 + \mbox{$\sum_{k \in {\mathbb N}}$}\, \zeta^{-k} [ (R_{\zeta,-1}A_{\zeta,0})^{k-1} R_{\zeta,-1}A_{\zeta,0} - (R_{\zeta,-1}A_{\zeta,0})^kR_{\zeta,-1}A_1 ] \\
& = - P_\zeta + \mbox{$\sum_{k \in {\mathbb N}}$}\, \zeta^{-k} (-1)^{k-1} [ R_{\zeta,-k}A_{\zeta,0} + R_{\zeta,-k-1} A_1 ] \\
& = -P_\zeta
\end{align*}
where we note from (\ref{mb:e2}) that $R_{\zeta,-k}A_{\zeta,0} + R_{\zeta,-k-1} A_1 = \bigzero$ for all $k \in {\mathbb N}$.  Now multiply (\ref{in:e1}) on the left by ${H_\zeta}^{-1} R_{\zeta,-1}$ to give $- P_\zeta \bfx_t + {H_\zeta}^{-1}R_{\zeta,-1}A_1 \bfx_{t-1} = {H_\zeta}^{-1} R_{\zeta,-1} \bveps_t$ for all $t \in {\mathbb Z}$.  It follows from $P_{\zeta} = R_{\zeta,-1}A_1$ and $R_{\zeta,-1} = R_{\zeta,-1}Q_\zeta$ that
\begin{equation}
\label{sar:e1}
- \bfx_{\zeta,t} + {H_\zeta}^{-1}\bfx_{\zeta,t-1} = {H_\zeta}^{-1}R_{\zeta,-1}\bveps_{\zeta,t} \quad \mbox{ for all } t \in {\mathbb Z}.
\end{equation}
Multiplying through by $H_\zeta$ and replacing $t$ by $t+1$ gives 
\begin{equation}
\label{sar:e2}
- H_\zeta \bfx_{\zeta,t+1} + \bfx_{\zeta,t} = R_{\zeta,-1}\bveps_{\zeta,t+1} \quad \mbox{ for all } t \in {\mathbb Z}.  \end{equation}
The desired recursions follow by rearranging (\ref{sar:e1}) and (\ref{sar:e2}).  

(ii) For $\zeta = \infty$ multiply (\ref{in:e1}) on the left by $R_{\infty,0}$ to give $R_{\infty,0}A_0 \bfx_t + R_{\infty,0}A_1 \bfx_{t-1} = R_{\infty,0}\bveps_t$ for all $t \in {\mathbb Z}$.  Now use $P_\infty = R_{\infty,0}A_0$, $R_{\infty,0}A_1 = R_{\infty,0}A_1[R_{\infty,0}A_0 + R_{\infty,-1}A_1] = R_{\infty,0}A_1R_{\infty,0}A_0$ because $R_{\infty,0}A_1R_{\infty,-1} = R_{\infty,0}Q_\infty^c = \bigzero$, and $R_{\infty,0} = R_{\infty,0}Q_\infty$ to deduce
\begin{equation}
\label{sar:e3}
\bfx_{\infty,t} + G_\infty \bfx_{\infty,t-1} = R_{\infty,0}\bveps_{\infty,t} \quad \mbox{ for all } t \in {\mathbb Z}.  \end{equation}
A simple rearrangement of (\ref{sar:e3}) gives the desired recursion. 

(iii) For $\zeta = 0$ multiply (\ref{in:e1}) on the left by $R_{0,-1}$ and replace $t$ by $t+1$ to give $R_{0,-1}A_0 \bfx_{t+1} + R_{0,-1}A_1\bfx_t = R_{0,-1}\bveps_{t+1}$ for all $t \in {\mathbb Z}$.  Now use $P_0 = R_{0,-1}A_1$ and $R_{0,-1} = R_{0,-1}Q_0$ to give
\begin{equation}
\label{sar:e4}
G_0 \bfx_{0,t+1} + \bfx_{0,t} = R_{0,-1}\bveps_{0,t+1} \quad \mbox{ for all } t \in {\mathbb Z}.
\end{equation}
A simple rearrangement of (\ref{sar:e4}) gives the desired recursion.  $\hfill \Box$

\begin{lemma}
\label{sar:lem2}
Suppose $\zeta \in \sigma$ with $\lvert \zeta\rvert \in (0,\infty)$.  The particular cumulation series $\bfx_{P, \zeta,t}$ can be written as a sum of inverse powers of weighted backward or forward differences.  The specific expressions are:
\begin{enumerate}[(i)]
\item if $\lvert \zeta \rvert \in (1,\infty)$ then $\bfx_{P, \zeta,t} = \mbox{$\sum_{k \in {\mathbb N}}$} (-1)^k R_{\zeta,-k}\, {\bnab_{\zeta}}^{-k} \bveps_{\zeta,t}$ for all $t \in {\mathbb Z}$;
\item if $\lvert \zeta \rvert \in (0,1)$ then $\bfx_{P, \zeta,t} = \mbox{$\sum_{k \in {\mathbb N}}$}\, (-1)^k R_{\zeta,-k}\, {\bDelta_{\zeta}}^{-k} \bveps_{\zeta, t+k}$ for all $t \in {\mathbb Z}$;
\item if $\lvert \zeta \rvert = 1$ then $\bfx_{p,\zeta,t} = \mbox{$\sum_{k \in {\mathbb N}}$} (-1)^k R_{\zeta,-k}\, {\bnab_{\zeta}}^{-k} \bveps_{\zeta,t_0^+,t}$ for all $t \in t_0 + {\mathbb N}$;
\item if $\lvert \zeta \rvert = 1$ then $\bfx_{p,\zeta,t} = \mbox{$\sum_{k \in {\mathbb N}}$} (-1)^k R_{\zeta,-k}\, {\bDelta_{\zeta}}^{-k} \bveps_{\zeta,t_0^-,t+k}$ for all $t \in t_0 - {\mathbb N}$.
\end{enumerate} 
\end{lemma}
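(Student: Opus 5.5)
We prove each part by expanding the powers of $H_\zeta^{\mp1}$ that appear in the particular cumulation series from the proof of Theorem~\ref{mr:thm1} as binomial series in the quasinilpotent operator $N_\zeta = R_{\zeta,-1}A_{\zeta,0}$. We then regroup by powers of $N_\zeta$ and identify the coefficients with the Laurent coefficients $R_{\zeta,-k}$. The key identity is item (iii) of the Laurent series paragraph in Section~\ref{s:mb}, namely $R_{\zeta,-k} = (-1)^{k-1}N_\zeta^{k-1}R_{\zeta,-1}$, or equivalently $N_\zeta^{k-1}R_{\zeta,-1} = (-1)^{k-1}R_{\zeta,-k}$.

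For part (i) we have $H_\zeta = \zeta(I_X - \zeta^{-1}N_\zeta)$. Since $N_\zeta$ commutes with itself, the binomial series gives
$$H_\zeta^{-\ell-1} = \zeta^{-\ell-1}\sum_{j \in {\mathbb N}-1}\binom{\ell+j}{j}\zeta^{-j}N_\zeta^{\,j}.$$
This converges absolutely for every $\ell$ because $\lVert N_\zeta^j\rVert^{1/j} \to 0$. Substituting into $\bfx_{p,\zeta,t} = -\sum_{\ell}H_\zeta^{-\ell-1}R_{\zeta,-1}\bveps_{\zeta,t-\ell}$ and setting $k = j+1$, we use $N_\zeta^{k-1}R_{\zeta,-1} = (-1)^{k-1}R_{\zeta,-k}$. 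The inner sum over $\ell$ is then $\sum_\ell \binom{k-1+\ell}{\ell}\zeta^{-k-\ell}\bveps_{\zeta,t-\ell} = {\bnab_\zeta}^{-k}\bveps_{\zeta,t}$ by (\ref{mr:e0b}). The overall sign is $-(-1)^{k-1} = (-1)^k$, which gives the claimed formula.

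Part (ii) is analogous. Here $H_\zeta^{\ell-1} = (\zeta I_X - N_\zeta)^{\ell-1}$ is a finite binomial sum,
$$H_\zeta^{\ell-1} = \sum_{j=0}^{\ell-1}\binom{\ell-1}{j}\zeta^{\ell-1-j}(-1)^jN_\zeta^{\,j}.$$
With $k = j+1$ and $\ell = k+r$, the coefficient becomes $\binom{k-1+r}{k-1}\zeta^{r}(-1)^{k-1}(-1)^{k-1}R_{\zeta,-k} = \binom{k-1+r}{r}\zeta^r R_{\zeta,-k}$. Comparing with (\ref{mr:e0f}), where ${\bDelta_\zeta}^{-k}\bveps_{\zeta,t+k}$ carries the factor $(-1)^k$, we get $\sum_r\binom{k-1+r}{r}\zeta^r\bveps_{\zeta,t+k+r} = (-1)^k{\bDelta_\zeta}^{-k}\bveps_{\zeta,t+k}$, which again yields the $(-1)^k R_{\zeta,-k}$ weighting.

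Parts (iii) and (iv) follow from exactly the same computations applied to the truncated series $\bveps_{\zeta,t_0^\pm}$. The finite sums in the proof of Theorem~\ref{mr:thm1}(iii) are precisely the full sums with the truncated innovations, because the terms with $t-\ell \le t_0$, or respectively $t+\ell > t_0$, vanish. Truncation also makes every inner sum finite, so in those parts the convergence questions disappear.

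The main obstacle is justifying the interchange of the double sum over $\ell$ and $k$ in parts (i) and (ii). We handle it by absolute convergence of
$$\sum_{\ell,j}\binom{\ell+j}{j}\lvert\zeta\rvert^{-\ell-j-1}\lVert N_\zeta^j\rVert\,\lVert R_{\zeta,-1}\rVert\,\lVert\bveps_{\zeta,t-\ell}\rVert$$
in case (i). Choose $\rho$ with $1 < \rho < \lvert\zeta\rvert$ and $\delta > 0$ small. The subexponential bound $\lVert\bveps_{t-\ell}\rVert \le Ce^{\delta\ell}$ together with $\lVert N_\zeta^j\rVert \le C_\epsilon\epsilon^j$ for any $\epsilon > 0$ dominates this double sum by $\sum_{\ell,j}\binom{\ell+j}{j}a^\ell b^j = (1-a-b)^{-1}$, where $a = e^\delta/\lvert\zeta\rvert$ and $b = \epsilon/\lvert\zeta\rvert$. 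This is finite once $a+b < 1$, which is possible because $\lvert\zeta\rvert > 1$. Case (ii) is treated similarly, using $\lvert\zeta\rvert < 1$ in the geometric factor $\zeta^r$. Once absolute convergence is established, Fubini for absolutely convergent double series in a Banach space justifies the regrouping.
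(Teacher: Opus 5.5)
Your proposal is correct and follows essentially the same route as the paper. Both expand $H_\zeta^{-\ell-1}$ (resp.\ $H_\zeta^{\ell-1}$) binomially in powers of $R_{\zeta,-1}A_{\zeta,0}$, use $(R_{\zeta,-1}A_{\zeta,0})^{k-1}R_{\zeta,-1} = (-1)^{k-1}R_{\zeta,-k}$, interchange the order of summation, recognise the inner sums via (\ref{mr:e0b}) and (\ref{mr:e0f}), and treat the truncated cases as finite sums. Where the paper only asserts absolute convergence to justify the interchange, your domination of the double series by $\sum_{\ell,j}\binom{\ell+j}{j}a^\ell b^j = (1-a-b)^{-1}$ actually supplies that justification.
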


\textbf{Proof.}  We refer readers to Section~\ref{s:mb} for details of the Laurent series expansions for $R(z)$.  Changes in the order of summation are justified because the series involved are absolutely convergent.  For (i) and (ii) with $\lvert \zeta \rvert \in (0,\infty)$ the Laurent series
\begin{equation*}
R_{\zeta, \mbox{\scriptsize\textup{sg}}}(z) = \mbox{$\sum_{k \in {\mathbb N}}$} (-1)^{k-1}(R_{\zeta,-1}A_{\zeta,0})^{k-1}R_{\zeta,-1}(z - \zeta)^{-k}
\end{equation*}
converges for all $ z \neq \zeta$.  Therefore $\lim_{k \to \infty} \lVert (R_{\zeta,-1}A_{\zeta,0})^k R_{\zeta,-1} \rVert^{1/k} \to 0$ as $k \to \infty$.  Note that $H_\zeta = \zeta I_X - R_{\zeta,-1}A_{\zeta,0}$.

\vspace{0.2cm}
(i) If $\lvert \zeta \rvert \in (1,\infty)$ then
\begin{align*}
\bfx_{p,\zeta,t} & = (-1) \mbox{$\sum_{\ell \in {\mathbb N}-1}$} ( \zeta I_X - R_{\zeta,-1}A_{\zeta,0})^{-\ell-1} R_{\zeta,-1} \bveps_{\zeta, t-\ell} \\
& = (-1) \mbox{$\sum_{\ell \in {\mathbb N}-1}$} \left\{ \mbox{$\sum_{k \in {\mathbb N}}$} \mbox{$\binom{\ell+k-1}{k-1}$} ( \zeta^{-1}R_{\zeta,-1}A_{\zeta,0})^{k-1} R_{\zeta,-1} \right\}\zeta^{-\ell-1} \bveps_{\zeta, t-\ell} \\
& = (-1) \mbox{$\sum_{k \in {\mathbb N}}$} (R_{\zeta,-1}A_{\zeta,0})^{k-1} R_{\zeta,-1} \left\{  \mbox{$\sum_{\ell \in {\mathbb N}-1}$} \mbox{$\binom{k-1+\ell}{\ell}$} \zeta^{-k-\ell} \bveps_{\zeta, t-\ell} \right\} \\
& = \mbox{$\sum_{k \in {\mathbb N}}$} (-1)^k R_{\zeta,-k}\, {\bnab_{\zeta}}^{-k} \bveps_{\zeta,t} \quad \mbox{ for all } t \in {\mathbb Z}.
\end{align*}

(ii) If $\lvert \zeta \rvert \in (0,1)$ then
\begin{align*}
\bfx_{p,\zeta,t} & = \mbox{$\sum_{\ell \in {\mathbb N}}$}\, ( \zeta I_X - R_{\zeta,-1}A_{\zeta,0})^{\ell-1} R_{\zeta,-1} \bveps_{\zeta, t+\ell} \\
& =  \mbox{$\sum_{\ell \in {\mathbb N}}$}\, \zeta^{\ell-1} \left\{ \mbox{$\sum_{k=1}^{\ell}$} \mbox{$\binom{\ell-1}{k-1}$} (-1)^{k-1}( \zeta^{-1}R_{\zeta,-1}A_{\zeta,0})^{k-1} R_{\zeta,-1} \right\} \bveps_{\zeta, t+\ell} \\
& = (-1) \mbox{$\sum_{k \in {\mathbb N}}$}\, (R_{\zeta,-1}A_{\zeta,0})^{k-1} R_{\zeta,-1} \left\{  (-1)^k \mbox{$\sum_{r \in {\mathbb N}-1}$} \mbox{$\binom{k-1+r}{r}$} \zeta^r \bveps_{\zeta, t+k+r} \right\} \\
& = \mbox{$\sum_{k \in {\mathbb N}}$}\, (-1)^k R_{\zeta,-k}\, {\bDelta_{\zeta}}^{-k} \bveps_{\zeta, t+k}  \quad \mbox{ for all } t \in {\mathbb Z}.
\end{align*}

(iii). If $\lvert \zeta \rvert = 1$ then
\begin{align*}
\bfx_{p,\zeta,t} & = (-1) \mbox{$\sum_{\ell=0}^{t-t_0-1}$} ( \zeta I_X - R_{\zeta,-1}A_{\zeta, 0})^{-\ell-1} R_{\zeta,-1} \bveps_{\zeta, t-\ell} \\
& = (-1) \mbox{$\sum_{\ell=0}^{t-t_0-1}$}\, \zeta^{-\ell-1} \mbox{$\sum_{k \in {\mathbb N}} \binom{\ell+k-1}{k-1}$} ( \zeta^{-1}R_{\zeta,-1}A_{\zeta,0})^{k-1} R_{\zeta,-1} \bveps_{\zeta, t-\ell} \\
& = (-1) \mbox{$\sum_{k \in {\mathbb N}}$} (R_{\zeta,-1}A_{\zeta,0})^{k-1} R_{\zeta,-1} \mbox{$\sum_{\ell=0}^{t-t_0-1}\,  \binom{k-1+\ell}{\ell}$} \zeta^{-k-\ell} \bveps_{\zeta, t-\ell} \\
& = \mbox{$\sum_{k \in {\mathbb N}}$} (-1)^k R_{\zeta,-k}\, {\bnab_{\zeta}}^{-k} \bveps_{\zeta,t_0^+,t} \quad \mbox{ for all } t \in t_0 + {\mathbb N}.
\end{align*}

(iv). If $\lvert \zeta \rvert = 1$ then
\begin{align*}
\bfx_{p,\zeta,t} & = \mbox{$\sum_{\ell=1}^{-(t-t_0)}$} ( \zeta I_X - R_{\zeta,-1}A_{\zeta,0})^{\ell-1} R_{\zeta,-1} \bveps_{\zeta, t+\ell} \\
& = \mbox{$\sum_{\ell=1}^{-(t-t_0)} \zeta^{\ell - 1} \sum_{k=1}^{\ell}\, \binom{\ell-1}{k-1}$}\, (-1)^{k-1} ( \zeta^{-1} R_{\zeta,-1}A_{\zeta,0})^{k-1}  R_{\zeta,-1} \bveps_{\zeta, t+\ell} \\
& = \mbox{$\sum_{k=1}^{-(t-t_0)}\, (-1)^{k-1} (R_{\zeta,-1}A_{\zeta,0})^{k-1}  R_{\zeta,-1} \sum_{\ell=k}^{-(t-t_0)}  \binom{\ell-1}{k-1}$}\, \zeta^{\ell - k} \bveps_{\zeta, t+\ell} \\
& = (-1) \mbox{$\sum_{k=1}^{-(t-t_0)}\, (R_{\zeta,-1}A_{\zeta,0})^{k-1}  R_{\zeta,-1} (-1)^k \sum_{r=0}^{-(t-t_0)-k}  \binom{k-1+r}{r}$}\, \zeta^r \bveps_{\zeta, t+k+r} \\
& = \mbox{$\sum_{k=1}^{-(t-t_0)}$}\, (-1)^k R_{\zeta,-k}\, {\bDelta_{\zeta}}^{-k} \bveps_{\zeta,t_0^-,t+k}  \quad \mbox{ for all } t \in t_0 - {\mathbb N}.
\end{align*}
In (iii) and (iv) the respective truncated forms of (\ref{mr:e0b}) and (\ref{mr:e0f}) remain valid when $\lvert \zeta \rvert = 1$.  $\hfill \Box$

\end{appendix}

\end{document}